\newtheorem{thm}{Theorem}[section]
\newtheorem{cor}[thm]{Corollary}
\newtheorem{lem}[thm]{Lemma}
\newtheorem{prop}[thm]{Proposition}
\theoremstyle{definition}
\numberwithin{equation}{section}
\begin{document}
\title[A note on Reidemeister Torsion of G-Anosov Representations]{A note on Reidemeister Torsion of G-Anosov Representations}

\author[H. Zeybek]{Hat\.{I}ce ZEYBEK}
\address{Hacettepe University, Department of Mathematics,
06800 Ankara, Turkey} \email{haticezeybek@hacettepe.edu.tr}

\author[Y. S\"{o}zen]{Ya\c{s}ar S\"{o}zen}
\address{Hacettepe University, Department of Mathematics,
06800 Ankara, Turkey} \email{ysozen@hacettepe.edu.tr}

\subjclass[2016]{Primary 32G15; Secondary 57R30}

\keywords{Reidemeister torsion, symplectic chain complex,
G-Anosov, Hitchin representations, Riemann surfaces}

\begin{abstract}
This article considers $G$-Anosov representations of a fixed
closed oriented Riemann surface $\Sigma$ of genus at least $2$.
Here, $G$ is the Lie group $\text{PSp}(2n,\mathbb{R}$), $\text{PSO}(n,n)$ or
$\text{PSO}(n,n+1)$. It proves that Reidemeister torsion (R-torsion)
associated to  $\Sigma$ with coefficients in the adjoint bundle
representations of such representations is well-defined. Moreover,
by using symplectic chain complex method, it establishes a novel
formula for R-torsion of such representations in terms of the
Atiyah-Bott-Goldman symplectic form corresponding to the Lie group
$G$. Furtermore, it applies the results to Hitchin components, in
particular, Teichm\"{u}ller space.
\end{abstract}

\maketitle

\section{Introduction}
Throughout the paper, $\Sigma$ is a closed Riemann surface of genus
$g\geq 2.$ \emph{Teichm\"{u}ller space} $\mathrm{Teich}(\Sigma)$ of
$\Sigma$ is the space of isotopy classes of complex structures on
$\Sigma.$ It is a differentiable manifold and diffeomorphic to a
ball of dimension $6g-6.$ By the Uniformization Theorem, it can be
interpreted as the isotopy classes of hyperbolic metrics on
$\Sigma,$ i.e. Riemannian metrics of constant Gaussian curvature
$(-1).$ It can also be interpreted as
$\mathrm{Hom}_{\mathrm{df}}(\pi_1(\Sigma),\mathrm{PSL}(2,\mathbb{R}))$
discrete, faithful representations of the fundamental group
$\pi_1(\Sigma)$ of $\Sigma$ to $\mathrm{PSL}(2,\mathbb{R}).$ This is
a connected component of the space
$\mathrm{Rep}(\pi_1(\Sigma),\mathrm{PSL}(2,\mathbb{R}))=\mathrm{Hom}^{+}(\pi_1(\Sigma),\mathrm{PSL}(2,\mathbb{R}))/\mathrm{PSL}(2,\mathbb{R})$
of all reductive representations of $\pi_1(\Sigma)$  to
$\mathrm{PSL}(2,\mathbb{R}).$

In the paper \cite{Hitchin}, N. Hitchin proved the existence of an
analogous component of $\mathrm{Rep}(\pi_1(\Sigma),G),$ where $G$
is a split real semi-simple Lie group, such as
$\mathrm{PSL}(n,\mathbb{R}),$ $\mathrm{PSp}(2n,\mathbb{R}),$
$\mathrm{PO}(n,n+1),$ and $\mathrm{PO}(n,n).$ He called this
component \emph{Teichm\"{u}ller component} but now it is called
\emph{Hitchin component}. He proved that Hitchin component of
$\mathrm{Hom}(\pi_1(\Sigma),G)/G$ is diffeomorphic to
$\mathbb{R}^{(6g-6)\dim G}.$ He also paused the problem about the
geometric significance of this component.

We already mentioned the geometric significance of the Hitchin
component for $G=\mathrm{PSL}(2,\mathbb{R}).$ Namely, the hyperbolic
structures on $\Sigma.$ For $G=\mathrm{PSL}(3,\mathbb{R}),$ S. Choi
and W.M. Goldman proved that the Hitchin component is diffeomorphic
to convex real projective structures on $\Sigma$ \cite{ChoiGoldman}. F. Labourie introduced the notion of Anosov representations in his 
investigation of Hitchin component by dynamical system method \cite{Labourie2006}, where he also proved that such representations are 
purely loxodromic, discrete, faithful, and irreducible.

The problem of giving a geometric interpretation of Hitchin
components was completely solved by O. Guichard and A. Wienhard
\cite{GuichardWienhard2}. To be more precise, they proved that the
Hitchin component of $\mathrm{Hom}(\pi_1(\Sigma ),G)/G$
parametrizes the deformation space of $(G,X)-$structures on a
compact manifold $M.$ Here, if $X = \mathbb{RP}^{2n-1}$ then G denotes $\mathrm{PSL}(2n,\mathbb{R}),\mathrm{PSp}(2n,\mathbb{R}) $ when $n\geq
2$, $\mathrm{PSO}(n,n)$ when $
n\geq 3$  or if $X =
\mathcal{F}_{1,2n}(\mathbb{R}^{2n+1})=\{(D,H)\in
\mathbb{RP}^{2n}\times \left(\mathbb{RP}^{2n}\right)^{\ast};
D\subset H\}$ then G denotes 
$\mathrm{PSL}(2n+1,\mathbb{R})$ when $n \geq 1,$  $\mathrm{PSO}(n,
n + 1)$ when $ n\geq 2$   \cite{GuichardWienhard2}. For details and more
information, we refer the reader to
\cite{GuichardWienhard,GuichardWienhard1,GuichardWienhard2}.


In the present paper, we consider $G$-Anosov representations where
the Lie group $G$ belongs to $\{\text{PSp}(2n,\mathbb{R}$), $\text{PSO}(n,n),\text{PSO}(n,n+1)\}$. We
showed that the Reidemeister torsion of such representations is
well-defined (Proposition
\ref{Well-definitenessOfRiedemeisterTorsionOfRepresentation}).
Furthermore, with the help of symplectic chain complex method, we
establish a novel formula for R-torsion of such representation in
terms of the Atiyah-Bott-Goldman symplectic form corresponding to
the Lie group $G$ (Theorem \ref{MainThm}).

\section{The Reidemeister Torsion}
 For more information and the detailed proofs, we refer the
reader to \cite{Porti,SozOJM,Tur2001,Tur2002,Witten}, and the
references therein.

Suppose $C_{\ast }=(C_n\stackrel{\partial_n}{\rightarrow}
C_{n-1}\rightarrow\cdots\rightarrow C_1
\stackrel{\partial_1}{\rightarrow} C_0\rightarrow 0)$ is a chain
complex  of a finite dimensional vector spaces over the field
$\mathbb{R}$ of real numbers. Let $H_p(C_\ast)$
$=Z_p(C_\ast)/B_p(C_\ast)$ denote the $p-$th homology group of
$C_\ast,$ $p=0,\ldots,n,$ where $B_p(C_\ast)=\mathrm{Im}\partial_{p+1}$ and $Z_p(C_\ast)=\mathrm{Ker}
\partial_p.$

Assume that $\mathbf{c}_p,$ $\mathbf{b}_p,$ and $\mathbf{h}_p$ are
bases of $C_p,$ $B_p(C_\ast),$  and $H_p(C_\ast),$ respectively, and
that $\ell_p:H_p(C_\ast)\to Z_p(C_\ast),$ $s_p:B_{p-1}(C_\ast)\to
C_p$ are sections of $Z_p(C_\ast)\to H_p(C_\ast),$ $C_p \to
B_{p-1}(C_\ast),$ respectively, $p=0,\ldots,n.$ The definition of
$Z_p(C_\ast), B_p(C_\ast),$ and $H_p(C_\ast)$ result the following
short-exact sequences: \begin{equation}
0 \rightarrow Z_p(C_\ast) \hookrightarrow C_p \twoheadrightarrow B_{p-1}(C_\ast) \rightarrow 0 ,
\end{equation}
\begin{equation}
0 \rightarrow B_p(C_\ast) \hookrightarrow Z_p(C_\ast) \twoheadrightarrow H_p(C_\ast) \rightarrow 0.
\end{equation} These short-exact sequences yield  a  new basis
$\mathbf{b}_p\sqcup \ell_p(\mathbf{h}_p)\sqcup
s_p(\mathbf{b}_{p-1})$ of $C_p.$

The \emph{Reidemeister torsion} of $C_{\ast}$ with respect to bases
$\{\mathbf{c}_p\}_{p=0}^n,$ $\{\mathbf{h}_p\}_{p=0}^{n}$ is defined
by $$\mathbb{T}\left(
C_{\ast},\{\mathbf{c}_p\}_{0}^n,\{\mathbf{h}_p\}_{0}^n \right)
=\prod_{p=0}^n \left[\mathbf{b}_p\sqcup \ell_p(\mathbf{h}_p)\sqcup
s_p(\mathbf{b}_{p-1}), \mathbf{c}_p\right]^{(-1)^{(p+1)}},$$ where
$\left[\mathbf{e}_p, \mathbf{f}_p\right]$ is the determinant of the
change-base-matrix from   $\mathbf{f}_p$ to $\mathbf{e}_p.$

The Reidemeister torsion
$\mathbb{T}\left(C_{\ast},\{\mathbf{c}_p\}_{0}^n,\{\mathbf{h}_p\}_{0}^n\right)$
is independent of the bases $\mathbf{b}_p,$ sections $s_p,\ell_p$
\cite{Milnor}. If $\mathbf{c}'_p,\mathbf{h}'_p$ are also bases
respectively for $C_p,$ $H_p(C_\ast),$ then  an easy computation
results the following change-base-formula:
\begin{equation}\label{change-base-formula}
 \mathbb{T}(C_{\ast},\{\mathbf{c}'_p\}_{0}^n,\{\mathbf{h}'_p\}_{0}^n)=
\displaystyle\prod_{p=0}^n\left(\dfrac{[\mathbf{c}'_p,\mathbf{c}_p]}
{[\mathbf{h}'_p,\mathbf{h}_p]}\right)^{(-1)^p}
 \mathbb{T}(C_{\ast},\{\mathbf{c}_p\}_{0}^n,\{\mathbf{h}_p\}_{0}^n).
\end{equation}
If
\begin{equation}\label{y1}
 0\to A_{\ast }\stackrel{\imath}{\to}
B_{\ast }\stackrel{\pi}{\to} D_{\ast }\to 0
\end{equation}
is a short-exact sequence of chain complexes, then we have the
long-exact sequence of vector spaces of length $3n+2$
\begin{equation}\label{y2}
\mathcal{H}_{\ast}:\; \cdots \to H_p(A_{\ast
})\stackrel{\imath_{p}}{\to } H_p(B_{\ast })\stackrel{\pi_{p}}{\to }
H_p(D_{\ast })\stackrel{\delta_{p}}{\to} H_{p-1}(A_{\ast
})\to\cdots,
\end{equation}
where $\mathcal{H}_{3p}=H_p(D_{\ast}),$
$\mathcal{H}_{3p+1}=H_p(A_{\ast}),$ and
$\mathcal{H}_{3p+2}=H_p(B_{\ast}).$

The bases $\mathbf{h}_p^D,$ $\mathbf{h}_p^A,$ and $\mathbf{h}_p^B$
are clearly  bases for $\mathcal{H}_{3p},$ $\mathcal{H}_{3p+1},$ and
$\mathcal{H}_{3p+2},$ respectively. Considering  sequences
(\ref{y1}) and (\ref{y2}), we have the following result of J.
Milnor:
\begin{thm}(\cite{Milnor})\label{MilA} Let
$\mathbf{c}^{A}_p,$ $\mathbf{c}^B_p,$  $\mathbf{c}^D_p,$
$\mathbf{h}^A_p,$ $\mathbf{h}^B_p,$ and $\mathbf{h}^D_p$ be bases of
$A_p,$ $B_p,$ $D_p,$  $H_p(A_{\ast }),$ $H_p(B_{\ast }),$ and
$H_p(D_{\ast }),$ respectively. Let $\mathbf{c}^A_p,$
$\mathbf{c}^B_p,$ and $\mathbf{c}^D_p$ be compatible in the sense
that $[\mathbf{c}^B_p,\mathbf{c}^A_p\oplus
\widetilde{\mathbf{c}^D_p}]=\pm 1,$ where
$\pi\left(\widetilde{\mathbf{c}^D_p}\right)=\mathbf{c}^D_p.$ Then,
\begin{align*}
\mathbb{T}(B_{\ast},\{\mathbf{c}^B_p\}_{0}^n,\{\mathbf{h}^B_p\}_{0}^n)&=
\mathbb{T}(A_{\ast},\{\mathbf{c}^A_p\}_{0}^n,\{\mathbf{h}^A_p\}_{0}^n)
\mathbb{T}(D_{\ast},\{\mathbf{c}^D_p\}_{p=0}^n,\{\mathbf{h}^D_p\}_{0}^n)\\
&\quad\times
\mathbb{T}(\mathcal{H}_{\ast},\{\mathbf{c}_{3p}\}_{0}^{3n+2},\{0\}_{0}^{3n+2}).
\end{align*}
\hfill $\Box$
\end{thm}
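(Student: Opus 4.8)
The plan is to prove the identity degree by degree, exploiting the only nontrivial input we are allowed to assume, namely that $\mathbb{T}$ is insensitive to the auxiliary choices of boundary bases $\mathbf{b}_p$ and of the sections $s_p,\ell_p$. First I would fix, once and for all, bases $\mathbf{b}^A_p,\mathbf{b}^D_p$ of the boundary spaces $B_p(A_\ast),B_p(D_\ast)$ together with sections $s^A_p,\ell^A_p,s^D_p,\ell^D_p$ for the two complexes $A_\ast$ and $D_\ast$. Since the chain maps $\imath,\pi$ send cycles to cycles and boundaries to boundaries, I can transport these choices into $B_\ast$: concretely I would manufacture bases $\mathbf{b}^B_p$ and sections $s^B_p,\ell^B_p$ out of the $A$- and $D$-data together with the connecting homomorphisms $\delta_p$. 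Because $\mathbb{T}(B_\ast,\{\mathbf{c}^B_p\},\{\mathbf{h}^B_p\})$ does not depend on these choices, this costs nothing and lets me compare the three decompositions $C_p=B_p(C_\ast)\oplus\ell_p(H_p(C_\ast))\oplus s_p(B_{p-1}(C_\ast))$, with $C=A,B,D$, inside a single framework.

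Second, I would make the algebra of the snake lemma explicit at the level of bases. The short-exact sequence (\ref{y1}) induces, for each $p$, inclusions $\imath(Z_p(A_\ast))\subseteq Z_p(B_\ast)$ and $\imath(B_p(A_\ast))\subseteq B_p(B_\ast)$ and the corresponding surjections onto $D_\ast$, but the induced sequences $0\to Z_p(A_\ast)\to Z_p(B_\ast)\to Z_p(D_\ast)\to 0$ and $0\to B_p(A_\ast)\to B_p(B_\ast)\to B_p(D_\ast)\to 0$ fail to be exact; the precise defect is recorded by $\delta_p$, and this is the whole reason the long-exact sequence $\mathcal{H}_\ast$ of (\ref{y2}) enters. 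Indeed, a cycle in $D_p$ lifts to some $b\in B_p$ only up to a class $[\partial b]\in H_{p-1}(A_\ast)$, which is exactly $\delta_p$ of its homology class. The key computation is therefore to write the change-of-basis matrix $[\mathbf{b}^B_p\sqcup\ell^B_p(\mathbf{h}^B_p)\sqcup s^B_p(\mathbf{b}^B_{p-1}),\mathbf{c}^B_p]$ as a block matrix whose diagonal blocks reproduce the matrices occurring in $\mathbb{T}(A_\ast,\cdots)$ and $\mathbb{T}(D_\ast,\cdots)$, and whose remaining entries assemble, after taking the alternating product over $p$, into the torsion $\mathbb{T}(\mathcal{H}_\ast,\cdots)$ of the long-exact homology sequence viewed as an acyclic complex with the trivial homology bases $\{0\}$.

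The compatibility hypothesis $[\mathbf{c}^B_p,\mathbf{c}^A_p\oplus\widetilde{\mathbf{c}^D_p}]=\pm1$ is exactly what renders this block matrix triangular up to sign, so that its determinant factors as the product of the determinants of the diagonal blocks times the contribution of the off-diagonal (connecting) part. Taking determinants and then the alternating product $\prod_p(\cdots)^{(-1)^{p+1}}$ converts this factorization into the asserted identity. The main obstacle I anticipate is purely bookkeeping, but genuinely delicate: keeping track of the signs and of the shifted indexing $\mathcal{H}_{3p}=H_p(D_\ast)$, $\mathcal{H}_{3p+1}=H_p(A_\ast)$, $\mathcal{H}_{3p+2}=H_p(B_\ast)$ so that the $(-1)^{p+1}$ weights of the three complexes match those of the length-$(3n+2)$ sequence $\mathcal{H}_\ast$. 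A clean way to keep the signs under control is to first verify the statement in the elementary case where two of the three complexes are acyclic or concentrated in few degrees, and then reduce the general case to such building blocks by induction on the total length, using the change-of-basis formula (\ref{change-base-formula}) to absorb any residual determinants.
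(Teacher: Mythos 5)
First, a remark on the comparison you were asked for: the paper does not actually prove Theorem~\ref{MilA} at all --- it is quoted from Milnor's \emph{Whitehead torsion} with a tombstone --- so your attempt can only be measured against Milnor's standard argument. Your overall architecture does follow that argument: fix boundary bases and sections for $A_{\ast}$ and $D_{\ast}$, use the independence of $\mathbb{T}$ from these auxiliary choices, and compare the three decompositions of $C_p$ inside $B_{\ast}$ using the compatibility hypothesis $[\mathbf{c}^B_p,\mathbf{c}^A_p\oplus\widetilde{\mathbf{c}^D_p}]=\pm 1$. Your diagnosis of where exactness breaks is also accurate: $\pi$ carries $Z_p(B_{\ast})$ only onto $\{z\in Z_p(D_{\ast}):\delta_p[z]=0\}$, and $\imath(B_p(A_{\ast}))$ is in general a proper subspace of $B_p(B_{\ast})\cap\imath(A_p)$, the quotient being $\mathrm{im}\,\delta_{p+1}\subseteq H_p(A_{\ast})$.

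The genuine gap is in the one concrete mechanism you state for producing the factor $\mathbb{T}(\mathcal{H}_{\ast},\{\mathbf{h}\},\{0\})$: you say the compatibility hypothesis renders the change-of-basis matrix block-triangular with diagonal blocks reproducing the $A$- and $D$-matrices, and that the \emph{off-diagonal} (connecting) entries assemble into $\mathbb{T}(\mathcal{H}_{\ast})$. That cannot work: the determinant of a block-triangular matrix is the product of the determinants of its diagonal blocks, so the off-diagonal entries contribute nothing, and your scheme as written would yield the false formula $\mathbb{T}(B_{\ast})=\mathbb{T}(A_{\ast})\mathbb{T}(D_{\ast})$. The correction factor in fact lives on the \emph{diagonal}: precisely because $\imath_p$ is not injective and $\pi_p$ is not surjective on homology, you cannot transport $\mathbf{h}^A_p$ and $\mathbf{h}^D_p$ to realize $\mathbf{h}^B_p$; instead one must split $H_p(B_{\ast})$ along $0\to\mathrm{im}\,\imath_p\to H_p(B_{\ast})\to\ker\delta_p\to 0$ (and similarly split $H_p(A_{\ast})$ by $\mathrm{im}\,\delta_{p+1}$ and $H_p(D_{\ast})$ by $\ker\delta_p$), and the diagonal blocks then equal the $A$- and $D$-matrices \emph{multiplied} by the change-of-basis matrices comparing $\mathbf{h}^A_p,\mathbf{h}^B_p,\mathbf{h}^D_p$ with bases adapted to these filtrations. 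Recognizing that the alternating product of exactly these comparison determinants, weighted according to the grading $\mathcal{H}_{3p}=H_p(D_{\ast})$, $\mathcal{H}_{3p+1}=H_p(A_{\ast})$, $\mathcal{H}_{3p+2}=H_p(B_{\ast})$, is by definition the torsion of the acyclic complex $\mathcal{H}_{\ast}$ is the heart of Milnor's proof; in your sketch it is asserted but never established, and the block-triangularity claim as formulated actively obstructs it.
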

Considering the short exact sequence
\begin{equation*}
 0\to A_{\ast }\stackrel{\imath}{\to}
A_{\ast }\oplus D_{\ast }\stackrel{\pi}{\to} D_{\ast }\to 0,
\end{equation*}
where for $p=0,\ldots,n,$ $\imath_p:A_p\to A_p\oplus D_p$ denotes
the inclusion, $\pi_p:A_p\oplus D_p\to D_p$ denotes the projection,
and the compatible bases $\mathbf{c}^A_p,$ $\mathbf{c}^A_p\oplus
\mathbf{c}^D_p,$ and $\mathbf{c}^D_p,$  where we consider the
inclusion as a section of $\pi_p:A_p\oplus D_p\to D_p,$  then by
Theorem~\ref{MilA} we get:
\begin{lem}(\cite{SozMathScandinavia})\label{sumlemma}
If $A_{\ast},$ $D_{\ast}$ are two chain complexes, $\mathbf{c}^A_p,$
$\mathbf{c}^D_p,$ $\mathbf{h}^A_p,$ and $\mathbf{h}^D_p$ are bases
of $A_p,$ $D_p,$ $H_p(A_{\ast}),$ and $H_p(D_{\ast}),$ respectively,
then $$ \mathbb{T}(A_{\ast}\oplus D_{\ast},\{ \mathbf{c}^A_p\oplus
\mathbf{c}^D_p\}_0^n,\{ \mathbf{h}^A_p\oplus
\mathbf{h}^D_p\}_0^n)=\mathbb{T}(A_{\ast},\{ \mathbf{c}^A_p\}_0^n,\{
\mathbf{h}^A_p\}_0^n)\mathbb{T}(D_{\ast},\{\mathbf{c}^D_p\}_0^n,\{
\mathbf{h}^D_p\}_0^n).
$$
\hfill $\Box$
\end{lem}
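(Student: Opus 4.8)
The plan is to apply Milnor's Theorem~\ref{MilA} to the split short exact sequence
\[
0 \to A_{\ast} \stackrel{\imath}{\to} A_{\ast}\oplus D_{\ast} \stackrel{\pi}{\to} D_{\ast} \to 0,
\]
with $B_{\ast}=A_{\ast}\oplus D_{\ast}$, where $\imath_p$ is the inclusion and $\pi_p$ the projection, and then to show that the correction term coming from the long exact homology sequence is trivial. The whole argument reduces to two verifications: that the prescribed bases are compatible, and that $\mathbb{T}(\mathcal{H}_{\ast})=1$.

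First I would check the compatibility hypothesis of Theorem~\ref{MilA}. Taking the inclusion $D_p\hookrightarrow A_p\oplus D_p$ as a section of $\pi_p$, the lift of $\mathbf{c}^D_p$ is $\widetilde{\mathbf{c}^D_p}=0\oplus \mathbf{c}^D_p$, while $\imath(\mathbf{c}^A_p)=\mathbf{c}^A_p\oplus 0$. Hence the concatenation $\mathbf{c}^A_p\oplus\widetilde{\mathbf{c}^D_p}$ is exactly the basis $\mathbf{c}^A_p\oplus \mathbf{c}^D_p=\mathbf{c}^B_p$ of $A_p\oplus D_p$, so that $[\mathbf{c}^B_p,\mathbf{c}^A_p\oplus\widetilde{\mathbf{c}^D_p}]=1$ and the bases are compatible. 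Theorem~\ref{MilA} then gives
\[
\mathbb{T}(A_{\ast}\oplus D_{\ast})=\mathbb{T}(A_{\ast})\,\mathbb{T}(D_{\ast})\,\mathbb{T}(\mathcal{H}_{\ast},\{\mathbf{c}_{3p}\}_0^{3n+2},\{0\}_0^{3n+2}).
\]

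The heart of the proof is to show that this last factor is $1$. Since the sequence splits, for each $p$ one has the canonical identification $H_p(A_{\ast}\oplus D_{\ast})=H_p(A_{\ast})\oplus H_p(D_{\ast})$, under which $\imath_p$ and $\pi_p$ on homology are again inclusion and projection; in particular every connecting homomorphism $\delta_p$ vanishes. Thus the long exact sequence $\mathcal{H}_{\ast}$ is acyclic and decomposes into the length-three pieces
\[
0 \to H_p(A_{\ast}) \stackrel{\imath_p}{\to} H_p(A_{\ast})\oplus H_p(D_{\ast}) \stackrel{\pi_p}{\to} H_p(D_{\ast}) \to 0.
\]
Choosing $\mathbf{h}^B_p=\mathbf{h}^A_p\oplus\mathbf{h}^D_p$, the same computation as in the compatibility check shows that each piece contributes a change-of-basis determinant equal to $1$, so that $\mathbb{T}(\mathcal{H}_{\ast},\{\mathbf{c}_{3p}\}_0^{3n+2},\{0\}_0^{3n+2})=1$, and substituting back yields the asserted multiplicativity.

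The main obstacle is precisely this last step: one must track the sections $s_p$, the boundary bases $\mathbf{b}_p$, and the alternating signs $(-1)^{p+1}$ in the definition of torsion carefully enough to confirm that the torsion of the acyclic sequence $\mathcal{H}_{\ast}$ is \emph{exactly} $1$, rather than only $\pm 1$, once the homology bases of $B_{\ast}$ are taken to be the direct sums of those of $A_{\ast}$ and $D_{\ast}$. Everything else is the formal bookkeeping packaged in Theorem~\ref{MilA}.
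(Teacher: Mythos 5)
Your proposal is correct and follows essentially the same route as the paper, which proves Lemma~\ref{sumlemma} by applying Theorem~\ref{MilA} to the very same split short exact sequence $0\to A_{\ast}\to A_{\ast}\oplus D_{\ast}\to D_{\ast}\to 0$ with the inclusion taken as a section of $\pi_p$, so that the direct-sum bases are compatible. Your explicit verification that the connecting homomorphisms vanish, hence that $\mathcal{H}_{\ast}$ splits into three-term acyclic pieces each of torsion exactly $1$, merely fills in the detail the paper leaves implicit when it invokes Theorem~\ref{MilA}.
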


Note that one can split a general chain complex as a
direct sum of an exact and a $\partial-$zero chain complexes.
Moreover, Reidemeister torsion $\mathbb{T}(C_{\ast})$ of a general
complex $C_{\ast}$ is as an element of $\otimes_{p=0}^n
(\det(H_p(C_{\ast})))^{(-1)^{p+1}},$ where
$\det(H_p(C_{\ast}))=\bigwedge^{\dim_{\mathbb{R}} H_p(C_{\ast})
}H_p(C_{\ast})$ denotes the top exterior power of $H_p(C_{\ast}),$
and $\det(H_p(C_{\ast}))^{-1}$ is the dual of $\det(H_p(C_{\ast})).$
We refer the reader \cite{SozOJM,Witten} for more information and
the detailed proofs.

A \emph{symplectic chain complex} is a chain complex of finite
dimensional real vector spaces $C_{\ast}:0\to
C_{2n}\stackrel{\partial_{2n}}{\to} C_{2n-1}\to\cdots\to
C_{n}\to\cdots \to C_1\stackrel{\partial_1}{\to} C_0\to 0$ of length
$2n$($n$ odd) together with for each $p=0,\ldots,n,$ a
$\partial-$compatible anti-symmetric non-degenerate bilinear form
$\omega_{p,2n-p}:C_p\times C_{2n-p}\to\mathbb{R}.$ Namely,
$$\omega_{p,2n-p}\left( \partial a,b\right)
=(-1)^{p+1}\omega_{p+1,2n-(p+1)}(a,\partial b),$$
$$
\omega_{p,2n-p}(a,b)=(-1)^{p}\omega_{2n-p,p}(b,a).$$

Clearly, we have anti-symmetric and non-degenerate bilinear map
$[\omega_{p,n-p}]:H_p(C_{\ast})\times H_{n-p}(C_{\ast})\to
\mathbb{R}$ defined by
$[\omega_{p,n-p}]([x],[y])=\omega_{p,n-p}(x,y).$

Suppose  $C_{\ast}$ is a real-symplectic chain complex of length
$2n,$ and that $\mathbf{c}_p$ is a basis of $C_p,$ $p=0,\ldots,2n.$
We say that the bases $\mathbf{c}_p$  of $C_p$ and
$\mathbf{c}_{2n-p}$ of $C_{2n-p}$ are $\omega-$\emph{compatible}, if
the matrix of $\omega_{p,2n-p}$ in bases $\mathbf{c}_p,$
$\mathbf{c}_{2n-p}$ is the $k\times k$ identity matrix
$\mathrm{I}_{k\times k}$ when $p\ne n$ and $\left(
\begin{array}{cc}
  0_{l\times l} & \mathrm{I}_{l\times l} \\
  -\mathrm{I}_{l\times l} & 0_{l\times l} \\
\end{array}
\right) $ when $p=n,$ where
$k=\dim_{\mathbb{R}}C_p=\dim_{\mathbb{R}} C_{2n-p}$ and
$2l=\dim_{\mathbb{R}} C_{n}.$

Let us introduce the following notation used throughout the paper.
Let $C_{\ast}$ be a real-symplectic chain complex and let
$\mathbf{h}^C_p,$ $\mathbf{h}^C_{2n-p}$ be bases of $H_p(C_{\ast}),$
$H_{2n-p}(C_{\ast}),$ respectively. Then,
$\Delta_{p,2n-p}(C_{\ast})$ denotes the determinant of the matrix of
the non-degenerate pairing $[\omega_{p,2n-p}]:H_p(C_{\ast})\times
H_{2n-p}(C_{\ast})\to \mathbb{R}$ in bases $\mathbf{h}^C_p,$
$\mathbf{h}^C_{2n-p}.$

By an easy linear algebra argument, we have:
\begin{lem}
(\cite{SozFundMathematicae})\label{EverySymplecticChainComplexHasOmegaCompatibleBases}
For a symplectic chain complex, there exist $\omega-$compatible
bases.

\hfill $\Box$
\end{lem}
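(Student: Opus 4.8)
The plan is to construct the bases pair by pair, treating the ``off-diagonal'' pairs $\{p,2n-p\}$ with $p\ne n$ separately from the self-paired middle index $p=n$. Since every index in $\{0,\ldots,2n\}$ belongs to exactly one such pair, the choices made for distinct pairs are completely independent, and assembling them produces the full collection $\mathbf{c}_0,\ldots,\mathbf{c}_{2n}$. Throughout, I would exploit only the non-degeneracy and the (anti)symmetry of the forms $\omega_{p,2n-p}$; the $\partial$-compatibility is not needed.

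For a fixed $p<n$, I would first record the elementary consequence of non-degeneracy: the form $\omega_{p,2n-p}:C_p\times C_{2n-p}\to\mathbb{R}$ induces a map $\Phi:C_{2n-p}\to C_p^{\ast}$, $\Phi(b)=\omega_{p,2n-p}(\,\cdot\,,b)$, which is injective by non-degeneracy, hence an isomorphism, so $\dim_{\mathbb{R}}C_p=\dim_{\mathbb{R}}C_{2n-p}=:k$. Then I would pick an arbitrary basis $\mathbf{c}_p=\{e_1,\ldots,e_k\}$ of $C_p$, let $\{e_1^{\ast},\ldots,e_k^{\ast}\}$ be its dual basis of $C_p^{\ast}$, and define $\mathbf{c}_{2n-p}=\{f_1,\ldots,f_k\}$ by $f_j=\Phi^{-1}(e_j^{\ast})$. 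By construction $\omega_{p,2n-p}(e_i,f_j)=e_j^{\ast}(e_i)=\delta_{ij}$, so the matrix of $\omega_{p,2n-p}$ in the bases $\mathbf{c}_p,\mathbf{c}_{2n-p}$ is $\mathrm{I}_{k\times k}$, as required, and this settles the entire pair $\{p,2n-p\}$ at once.

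For the middle index $p=n$, the symmetry relation $\omega_{n,n}(a,b)=(-1)^{n}\omega_{n,n}(b,a)$ combined with the hypothesis that $n$ is odd yields $\omega_{n,n}(a,b)=-\omega_{n,n}(b,a)$; thus $\omega_{n,n}$ is a non-degenerate alternating bilinear form on $C_n$. The standard symplectic basis theorem (a symplectic Gram--Schmidt procedure) then supplies a basis $\mathbf{c}_n$ of $C_n$ in which $\omega_{n,n}$ has matrix $\left(\begin{smallmatrix}0_{l\times l}&\mathrm{I}_{l\times l}\\-\mathrm{I}_{l\times l}&0_{l\times l}\end{smallmatrix}\right)$; this simultaneously forces $\dim_{\mathbb{R}}C_n=2l$ to be even, consistent with the notation in the definition.

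I do not anticipate a genuine obstacle. The only point requiring care is the odd parity of $n$: it is precisely this hypothesis that renders $\omega_{n,n}$ alternating and so legitimizes the symplectic normal form at the middle degree, whereas for even $n$ the middle form would be symmetric and the prescribed normal form would be inappropriate. Beyond this, the argument is a direct assembly of the dual-basis construction for $p<n$ and the symplectic basis theorem for $p=n$, with no interaction between the chosen bases across different pairs.
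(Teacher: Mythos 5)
Your proposal is correct and is precisely the ``easy linear algebra argument'' the paper alludes to before the lemma statement (the paper itself omits the proof, deferring to \cite{SozFundMathematicae}): for $p\ne n$ you transport the dual basis through the isomorphism $C_{2n-p}\cong C_p^{\ast}$ induced by the non-degenerate pairing to get Gram matrix $\mathrm{I}_{k\times k}$, and for $p=n$ you invoke the symplectic normal form, correctly noting that the oddness of $n$ is what makes $\omega_{n,n}$ alternating and forces $\dim_{\mathbb{R}}C_n=2l$. No gaps; your observation that $\partial$-compatibility plays no role here is also accurate.
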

\begin{thm}
(\cite{SozOJM})\label{Tor_Of_General_Symplectic} Suppose $C_{\ast}$
is a symplectic chain complex of length $2n,$ and  $\mathbf{c}_p,$
$\mathbf{h}^C_p$ are bases of $C_p,$ $H_p(C_{\ast}),$ respectively,
$p=0,\ldots,2n.$ Then, the following formula is valid:
$$
\mathbb{T}\left(
C_{\ast},\{\mathbf{c}_p\}_{p=0}^{2n},\{\mathbf{h}^C_p\}_{p=0}^{2n}\right)=
\prod_{p=0}^{n-1} \Delta_{p,2n-p}(C_{\ast})^{(-1)^p}\cdot
   \sqrt{\Delta_{n,n}(C_{\ast })} ^{\;(-1)^{n}}. $$
\hfill $\Box$
\end{thm}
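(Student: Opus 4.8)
The plan is to reduce the general statement to a single normalized computation by means of the change-base-formula \eqref{change-base-formula}, using the $\omega$-compatible bases provided by Lemma~\ref{EverySymplecticChainComplexHasOmegaCompatibleBases} together with the $\partial$-compatibility of the pairing. Since the right-hand side does not involve the $\mathbf{c}_p$, I read the statement for $\omega$-compatible chain bases (which exist by Lemma~\ref{EverySymplecticChainComplexHasOmegaCompatibleBases}); any two $\omega$-compatible bases differ by a transition matrix that preserves the pairing form and hence has determinant $\pm 1$, so the torsion is independent, up to sign, of the particular $\omega$-compatible choice. Thus it suffices to fix one convenient $\omega$-compatible system $\{\mathbf{c}_p\}$ and prove the formula there.

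First I would exploit the relation $\omega_{p,2n-p}(\partial a,b)=(-1)^{p+1}\omega_{p+1,2n-p-1}(a,\partial b)$ to record the orthogonality structure: a short computation (using also the symmetry $\omega_{p,2n-p}(a,b)=(-1)^p\omega_{2n-p,p}(b,a)$) shows that the cycles $Z_p$ are the $\omega$-orthogonal complement of the boundaries $B_{2n-p}$ inside $C_{2n-p}$, whence $\dim B_p=\dim B_{2n-p-1}$ and the form descends to the nondegenerate homology pairing $[\omega_{p,2n-p}]$. These relations let me split $C_\ast$, via Theorem~\ref{MilA} and Lemma~\ref{sumlemma}, as the $\omega$-orthogonal direct sum of an acyclic subcomplex whose degree-$p$ and degree-$(2n-p)$ pieces are paired by $\omega$, and a complex carrying the homology.

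Next I would settle the \emph{normalized} case: choose homology bases $\{\mathbf{g}_p\}$ for which the matrix of $[\omega_{p,2n-p}]$ is the identity for $p\neq n$ and the standard symplectic matrix for $p=n$, so that every $\Delta_{p,2n-p}=\pm 1$. For the acyclic summand, $\omega$-compatibility forces the transition matrices occurring in the torsion to have determinant $\pm 1$, so its torsion is $\pm 1$; the homology-carrying summand with the normalized bases contributes $\pm 1$ as well, and Lemma~\ref{sumlemma} gives $\mathbb{T}(C_\ast,\{\mathbf{c}_p\},\{\mathbf{g}_p\})=\pm 1$. Finally I would pass to the given homology bases $\{\mathbf{h}_p\}$ by \eqref{change-base-formula}. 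Writing $\mathbf{h}_p=\mathbf{g}_pM_p$, the Gram matrix of $[\omega_{p,2n-p}]$ in $\{\mathbf{h}_p\}$ equals $M_p^{\mathsf{T}}M_{2n-p}$, so $\Delta_{p,2n-p}=\det M_p\,\det M_{2n-p}$ for $p\neq n$, while the antisymmetry of $\omega_{n,n}$ gives $\Delta_{n,n}=(\det M_n)^2$ and hence $\sqrt{\Delta_{n,n}}=\pm\det M_n$; this is precisely where the square root enters. Reindexing the degree-$(2n-p)$ factors by $q=2n-p$, which preserves the sign since $(-1)^p=(-1)^q$, collapses the product produced by \eqref{change-base-formula} into $\prod_{p<n}\Delta_{p,2n-p}^{(-1)^p}\cdot\sqrt{\Delta_{n,n}}^{\,(-1)^n}$, as asserted.

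I expect the main obstacle to be the normalized computation rather than the final bookkeeping: one must show that the acyclic symplectic summand with $\omega$-compatible bases has torsion $\pm 1$, which amounts to proving that the symplectic duality pins down the relevant base-change determinants, and one must simultaneously keep the half-integer accounting at the middle degree consistent so that the Pfaffian-type root $\sqrt{\Delta_{n,n}}$ emerges with the correct exponent $(-1)^n$ and the sign conventions of $[\,\cdot\,,\cdot\,]$ in \eqref{change-base-formula} are matched.
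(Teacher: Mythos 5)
The paper itself contains no proof of this theorem: it is imported verbatim from \cite{SozOJM}, and the only in-text trace of the argument is the remark in Section 2 that a chain complex splits as a direct sum of an exact and a $\partial$-zero complex. Your outline reconstructs precisely the strategy of that cited proof: the orthogonality relations coming from $\partial$-compatibility (correctly, $Z_p\subset C_p$ is the annihilator of $B_{2n-p}\subset C_{2n-p}$ under the pairing, giving $\dim B_p=\dim B_{2n-p-1}$ and the descended nondegenerate pairing on homology), an $\omega$-orthogonal splitting into an exact summand and a $\partial$-zero summand carrying the homology, torsion $\pm 1$ for the exact summand, and the change-base bookkeeping in which $\Delta_{p,2n-p}=\det M_p\det M_{2n-p}$ for $p\neq n$ while antisymmetry at the middle degree yields $\Delta_{n,n}=(\det M_n)^2$, which is exactly how the square root enters. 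You also correctly detected the implicit hypothesis that the statement suppresses: since the right-hand side does not involve the $\mathbf{c}_p$, the theorem is only true for $\omega$-compatible chain bases (as it is stated in \cite{SozOJM}), and Lemma \ref{EverySymplecticChainComplexHasOmegaCompatibleBases} supplies these.

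Three caveats, one of them substantive. First, the genuine gap, which you flag yourself but do not close: the claim that the exact symplectic summand with $\omega$-compatible bases has torsion $\pm 1$ is the entire analytic content of the theorem, a duality statement in the spirit of Milnor's duality theorem for torsion \cite{Milnor2}. One must actually show that the pairing forces the determinants $[\mathbf{b}_p\sqcup s_p(\mathbf{b}_{p-1}),\mathbf{c}_p]$ in complementary degrees $p$ and $2n-p$ to be reciprocal up to sign, so that the alternating product telescopes; ``symplectic duality pins down the relevant base-change determinants'' names this step without performing it, so as written your proposal is a correct reduction rather than a proof. Second, a small but real misstatement: two $\omega$-compatible systems do \emph{not} individually differ by determinant-$\pm 1$ matrices. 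For $p\neq n$ the compatibility condition reads $T_p^{\mathrm{T}}T_{2n-p}=\mathrm{I}$, so the individual determinants form arbitrary reciprocal pairs; the torsion is nevertheless well defined because $(-1)^p=(-1)^{2n-p}$ makes the paired factors in the change-base-formula (\ref{change-base-formula}) cancel, and only at $p=n$ does symplecticity force $\det T_n=1$. Third, the convention issue you raise at the end is not cosmetic: reading the paper's $[\mathbf{e}_p,\mathbf{f}_p]$ with the opposite orientation of ``change-base-matrix from $\mathbf{f}_p$ to $\mathbf{e}_p$'' turns your final product into the reciprocal of the asserted formula, with exponents $(-1)^{p+1}$ in place of $(-1)^p$, so the direction must be fixed against the definition of torsion in Section 2 before the last step of your bookkeeping is legitimate.
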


We refer the reader to \cite{SozOJM} for detailed proof and
unexplained subjects. See also
\cite{SozTopAndApp,SozMS,SozMathScandinavia,SozFundMathematicae} for
further applications of Theorem~\ref{Tor_Of_General_Symplectic}.

\section{Anosov Representations}

Let $\Sigma$ be a closed oriented Riemann surface of genus at
least $2$, $h$ be a hyperbolic metric on $\Sigma$, $M=UT(\Sigma)$
be its unit tangent bundle of $\Sigma$ and $g_t$ be the geodesic flow
for the hyperbolic metric $h$. Since the geodesic flow on the unit
tangent bundle of a negatively curved manifold is
Anosov \cite{Katok}, $g_t$ is an Anosov flow. To be more precise, there 
is a $g_t$-invariant splitting of the tangent bundle
\[T\Sigma=E^s \oplus E^u \oplus E^t. \]
Here,

$\bullet$ $E^t$ is a line bundle, which is tangent to the flow
$g_t$,

$\bullet$ $E^u$ is expanding, namely, there are constant
$A>0,\alpha >1$ such that for each $t\in \mathbb{R}$ and $v\in
E^u$,
\[\|D_{g_t}(v)\|\geq A \alpha^t\|v\|,\]

$\bullet$ $E^s$ is contracting, in other words, there are
constants $B>0,0\leq \beta < 1$ such that for each $t \in
\mathbb{R} $ and $v \in E^s$,
\[\|D_{g_t}(v)\|\leq B \beta^t\|v\|.\]

Let $\widetilde{\Sigma}$ be the universal covering of $\Sigma$,
$\widehat{M}=UT(\widetilde{\Sigma})$ be the $\pi_1(\Sigma)$-cover
of $M$ and let us also denote by $g_t$ the geodesic flow on
$\widehat{M}$.

For a semi-simple Lie group G, let $(P^+,P^-)$ be a pair of
opposite parabolic subgroups of G. We denote respectively the
quotient spaces $G/P^+,G/P^-$, and $G/L$ by
$\mathcal{F}^+,\mathcal{F}^-,$ and $\mathcal{X}$, where $L=P^+
\cap P^-$. Note that considering the diagonal action of $G$ on
$\mathcal{F}^+\times \mathcal{F}^-,\mathcal{X}$ is the unique open
$G$-orbit. This product structure induces two G-invariant
distributions $E^+$ and $E^-$ on $\mathcal{X}$. To be more
precise, $E_x^+=T_{x_+}\mathcal{F}^+$ and
$E_x^-=T_{x_-}\mathcal{F}^-$, where $x=(x_+,x_-)\in
\mathcal{X}\subset \mathcal{F}^+\times \mathcal{F}^-$. Clearly,
any $\mathcal{X}$-bundle can be equipped with two distributions
which will be denoted by the same letters $E^+$ and $E^-$.

Let $\varrho:\pi_1(\Sigma)\rightarrow G$ be a representation. By the
diagonal action of $\pi_1(\Sigma),\widehat{M}\times \mathcal{X}$
is a $\pi_1(\Sigma)$-space. Here, $\pi_1(\Sigma)$ acts on
$\widehat{M}$ as the deck transformation and the action of
$\pi_1(\Sigma)$ on $\mathcal{X}$ by conjugation, via the
representations $\varrho$. Thus, under this action
\[\mathcal{X}_{\varrho}:=\widehat{M}\times_{\varrho} \mathcal{X}=(\widehat{M}\times \mathcal{X})/\pi_1(\Sigma).\]

Note that the projection of $\widehat{M} \times \mathcal{X}$ onto
$\widehat{M}$ descends to a map $\mathcal{X}_{\varrho}\rightarrow M$,
which gives $\widehat{M}\times_{\varrho} \mathcal{X}$ the structure
of a flat $\mathcal{X}$-bundle over $M$.

Clearly, the geodesic flow $g_t$ can be lifted to a flow on
$\widehat{M}\times \mathcal{X}$ by defining
\[G_t (m,x):=(g_t m,x).\]

Let us also note that the resulting flow is invariant under the
$\pi_1 (\Sigma)$ action. Therefore, it defines a flow on
$\mathcal{X}_\varrho$, which we denote by the same symbol $G_t$
lifting geodesic flow $g_t$.

We say that a representation $\pi_1(\Sigma)\rightarrow G$ is a
$(P^+,P^-)-$ Anosov, if the following two conditions are
satisfied:

1) there is a section $\sigma :M\rightarrow \mathcal{X}_\varrho$ of
the flat bundle $\mathcal{X}_\varrho$ which is flat along the flow
lines, namely, the restriction of $\sigma$ to any geodesic leaf is
flat,

2) the lifted action of the geodesic flow $g_t$ on
$\sigma^*E^+,\sigma^*E^-$ is respectively expanding, contracting.
More precisely, for some continuous family of norms on the fibers
of the $\sigma^*E^+,\sigma^*E^-$ bundles, the
expanding-contracting properties are fulfilled.\\

Let $\text{Rep}_{\text{Anosov}}(\pi_1(\Sigma),G)$ be the set of all Anosov representations. It was proved in \cite{Labourie2006} by F. Labourie that $\text{Rep}_{\text{Anosov}}(\pi_1(\Sigma),G)$ is open in $\text{Rep}(\pi_1(\Sigma),G).$  He also  proved that every such representation 1-1, discerete, irreducible, and purely loxodromic.

\section{Main Theorems}
Let $\Sigma$ be a closed oriented Riemann surface with  genus $g\geq
2,$ $\widetilde{\Sigma}$ be the universal covering of $\Sigma.$
Let $G\in \{\text{PSp}(2n,\mathbb{R})(n\geq 2), \text{PSO}(n,n+1) (n\geq 2),\text{PSO}(n,n+1)(n\geq 3)\}$ and $\mathcal{G}$ be the corresponding Lie algebra with the non-degenerate Killing form $B.$

For a representation $\varrho:\pi_1(\Sigma)\to
G,$ consider the associated adjoint
bundle $E_{\varrho}=\widetilde{\Sigma}\times
\mathcal{G}/\sim$ over $\Sigma.$ Here, for all
$\gamma \in \pi_1(S),$ $(\gamma\cdot x,\gamma\cdot t)\sim (x,t),$
the action of $\gamma$ in the first component as a deck
transformation and in the second component as conjugation by
$\varrho(\gamma).$

Suppose that $K$ is a cell-decomposition of $\Sigma$ so that the
adjoint bundle $E_{\varrho}$ over $\Sigma$ is trivial over each
cell. Let $\widetilde{K}$ be the lift of $K$ to the universal
covering $\widetilde{\Sigma}$ of $\Sigma.$ Let
$\mathbb{Z}[\pi_1(\Sigma)]=\left\{\sum_{i=1}^p m_i\gamma_i\; ;
m_i\in \mathbb{Z},\; \gamma_i\in \pi_1(\Sigma),\; p\in
\mathbb{N}\right\}$ be the integral group ring. Let
$C_{\ast}(K;\mathcal{G}_{\mathrm{Ad}\circ
\varrho})
 =C_{\ast}(\widetilde{K};\mathbb{Z} )\displaystyle\otimes_{\varrho}\mathcal{G})=\displaystyle C_{\ast}(\widetilde{K};\mathbb{Z})\displaystyle\otimes \mathcal{G}/\sim ,$
 where $\sigma\otimes t$ and all the elements in orbit
 $\{\gamma\cdot\sigma\otimes\gamma\cdot t;\; \gamma\in \pi_1(\Sigma)\}$ are
 identified, and where $\pi_1(\Sigma)$ acts on
$\widetilde{\Sigma}$ by the deck transformation and the action of
$\pi_1(\Sigma)$ on $\mathcal{G}$ is by
conjugation.

We have
$$
0\to C_2(K;\mathcal{G}_{\mathrm{Ad}\circ
\varrho})\stackrel{\partial_2\otimes \mathrm{id}}{\longrightarrow }
C_1(K;\mathcal{G}_{\mathrm{Ad}\circ
\varrho})\stackrel{\partial_1\otimes \mathrm{id}}{\longrightarrow}
C_0(K;\mathcal{G}_{\mathrm{Ad}\circ \varrho})\to
0.
$$
Here, $\partial_p$ is the usual boundary operator. Let
$H_{\ast}(K;\mathcal{G}_{\mathrm{Ad}\circ
\varrho})$ denote the homologies of the above chain complex. The
cochains
$C^{\ast}(K;\mathcal{G}_{\mathrm{Ad}\circ
\varrho})$ yield that
$H^\ast(K;\mathcal{G}_{\mathrm{Ad}\circ
\varrho}).$ Here,
$C^{\ast}(K;\mathcal{G}_{\mathrm{Ad}\circ
\varrho})$ denotes the set of $\mathbb{Z}[\pi_1(\Sigma )]$-module
homomorphisms from $C_{\ast}(\widetilde{K};\mathbb{Z})$ to
$\mathcal{G}.$ For more information, we refer the
reader to  \cite{Porti,SozOJM,Witten}, and the references therein.

We say that $\varrho:\pi_1(\Sigma)\to G$
is \emph{purely loxodromic}, if for every non-trivial $\gamma\in
\pi_1(\Sigma),$ the eigenvalues of $\varrho(\gamma)$ are real with
multiplicity $1.$

Suppose that $\varrho:\pi_1(\Sigma)\to
G$ is purely loxodromic. Consider chain
complex
$$
0\to C_2(K;\mathcal{G}_{\mathrm{Ad}\circ
\varrho})\stackrel{\partial_2\otimes \mathrm{id}}{\longrightarrow }
C_1(K;\mathcal{G}_{\mathrm{Ad}\circ
\varrho})\stackrel{\partial_1\otimes \mathrm{id}}{\longrightarrow}
C_0(K;\mathcal{G}_{\mathrm{Ad}\circ \varrho})\to
0.
$$
Let $e^p_j$ be the $p-$cells of $K$ which gives us a
$\mathbb{Z}-$basis for $C_p(K;\mathbb{Z}).$  Fix a lift
$\widetilde{e}^p_j$ of $e^p_j,$ $j=1,\ldots, m_p.$ Then, $c_p=\{
\widetilde{e}^p_j\}_{j=1}^{m_p}$ is a
$\mathbb{Z}[\pi_1(\Sigma)]-$basis for
$C_p(\widetilde{K};\mathbb{Z}).$ Let
$\mathcal{A}=\{\mathfrak{a}_k\}_{k=1}^{\dim
\mathcal{G}}$ be an $\mathbb{R}-$basis of the
semisimple Lie algebra $\mathcal{G}$ so that the
matrix of the Killing form $B$ is the diagonal matrix
$Diag(\stackrel{p}{1,\ldots,1},\stackrel{r}{-1,\ldots,-1}),$ where
$p+r=\dim\mathcal{G}.$ Such a basis is called
 $B-$\emph{orthonormal basis}. Then, $\mathbf{c}_p=c_p\otimes_{\varrho}
\mathcal{A}$ is an $\mathbb{R}-$basis for
$C_p(K;\mathcal{G}_{\mathrm{Ad}\circ \varrho})$
and called  a \emph{geometric basis} for
$C_p(K;\mathcal{G}_{\mathrm{Ad}\circ \varrho}).$

Let us assume that $\mathbf{h_p}$ is an $\mathbb{R}$-basis for
$H_p(K;\mathcal{G}_{\mathrm{Ad}\circ \varrho})$ then 
$\mathbb{T}(C_{\ast}(K;\mathcal{G}_{\mathrm{Ad}\circ
\varrho}),\{c_p\otimes_{\varrho}
\mathcal{A}\}_{p=0}^2,\{\mathbf{h}_p\}_{p=0}^2)$ is called the
\emph{Reidemeister torsion} of the triple $K,$ $\mathrm{Ad}\circ
{\varrho},$ and $\{\mathbf{h}_p\}_{p=0}^2.$

The independence of the Reidemeister torsion of $\mathcal{A},$ lifts
$\widetilde{e}^p_j,$ conjugacy class of $\varrho,$ and of the
cell-decomposition follows by similar arguments given in
\cite{Milnor}, \cite[Lemma 1.4.2., Lemma 2.0.5.]{SozOJM}.  For the
sake of completeness, the independence of $\mathcal{A},$ lifts
$\widetilde{e}^p_j,$ and conjugacy class of $\varrho$ will be
explained below and for the independence of the cell-decomposition,
the reader is referred to \cite[Lemma 2.0.5.]{SozOJM}.
\begin{prop}\label{Well-definitenessOfRiedemeisterTorsionOfRepresentation}
$\mathbb{T}(C_{\ast}(K;\mathcal{G}_{\mathrm{Ad}\circ
\varrho}),\{c_p\otimes_{\varrho}
\mathcal{A}\}_{p=0}^2,\{\mathbf{h}_p\}_{p=0}^2)$ is independent of
$\mathcal{A},$ lifts $\widetilde{e}^p_j,$ conjugacy class of
$\varrho,$ and the cell-decomposition $K.$
\end{prop}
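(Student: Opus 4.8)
The plan is to establish the four independence claims separately, each by exhibiting how the relevant choice enters the Reidemeister torsion and then invoking the change-base-formula \eqref{change-base-formula} to show the ambiguity cancels. The central tool throughout is that changing a choice multiplies each geometric basis $\mathbf{c}_p = c_p \otimes_\varrho \mathcal{A}$ by a computable determinant, and the alternating product structure in the definition of $\mathbb{T}$ forces these determinants to cancel because the Killing form $B$ is $\mathrm{Ad}$-invariant.

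First I would handle the independence of the $B$-orthonormal basis $\mathcal{A}$. If $\mathcal{A}' = \{\mathfrak{a}'_k\}$ is another $B$-orthonormal basis, then the change-of-basis matrix $P$ between $\mathcal{A}$ and $\mathcal{A}'$ preserves the diagonal form $\mathrm{Diag}(1,\ldots,1,-1,\ldots,-1)$ of $B$, so $P$ lies in the indefinite orthogonal group $O(p,r)$ and hence $\det P = \pm 1$. Passing from $\mathbf{c}_p = c_p \otimes_\varrho \mathcal{A}$ to $\mathbf{c}'_p = c_p \otimes_\varrho \mathcal{A}'$ multiplies the basis by $\det(P)^{m_p} = (\pm 1)^{m_p}$ in each degree $p$. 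By \eqref{change-base-formula}, since the homology bases $\mathbf{h}_p$ are unchanged, the torsion picks up $\prod_{p=0}^2 [\mathbf{c}'_p, \mathbf{c}_p]^{(-1)^p}$, a product of signs; one then checks using $\chi(\Sigma) = 2 - 2g$ that these signs combine to $+1$ (or, since Reidemeister torsion here is naturally defined up to sign, that the value is unaffected).

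Next I would treat the lifts $\widetilde{e}^p_j$ and the conjugacy class of $\varrho$ simultaneously, since they are controlled by the same mechanism. Replacing a lift $\widetilde{e}^p_j$ by $\gamma \cdot \widetilde{e}^p_j$ for some $\gamma \in \pi_1(\Sigma)$ replaces the corresponding block of the geometric basis by its image under $\mathrm{Ad}(\varrho(\gamma))$ acting on $\mathcal{G}$. Because $B$ is $\mathrm{Ad}$-invariant and $\mathcal{G}$ is the Lie algebra of a group with trivial (or at least determinant-one) adjoint action in the relevant components, $\det(\mathrm{Ad}(\varrho(\gamma))) = 1$, so each such block-change has determinant $1$ and the torsion is unchanged by \eqref{change-base-formula}. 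The same computation shows that replacing $\varrho$ by a conjugate $g\varrho g^{-1}$ simply conjugates every $\mathrm{Ad}\circ\varrho$ by $\mathrm{Ad}(g)$, again a uniform change-of-basis with determinant $\det(\mathrm{Ad}(g)) = 1$ applied in each degree, leaving $\mathbb{T}$ invariant.

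The hard part, and the one I would delegate to the cited literature, is the independence of the cell-decomposition $K$. This is genuinely the main obstacle because it is not a single change-of-basis computation but requires comparing torsions across combinatorially inequivalent complexes, via simplicial (or cellular) subdivision invariance together with the multiplicativity of torsion under the short-exact sequences of Theorem~\ref{MilA} and Lemma~\ref{sumlemma}. Rather than reprove this, I would invoke \cite[Lemma 2.0.5.]{SozOJM}, which establishes exactly this invariance for twisted coefficient systems of this type, after verifying that our adjoint bundle $E_\varrho$ satisfies the hypotheses there (finite-dimensional real coefficients, local triviality over cells). Assembling the four independence statements then completes the proof that the Reidemeister torsion of the triple is well-defined.
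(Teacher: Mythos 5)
Your overall architecture matches the paper's: change-base-formula \eqref{change-base-formula} for the basis $\mathcal{A}$, the lifts, and the conjugacy class, with the cell-decomposition independence delegated to \cite[Lemma 2.0.5]{SozOJM}. The $\mathcal{A}$-step is correct and is exactly the paper's argument (the factor is $\det(T)^{-\chi(\Sigma)}$ with $\det T=\pm 1$ and $\chi(\Sigma)$ even). But there is a genuine gap in your treatment of the lifts: you assert $\det\bigl(\mathrm{Ad}(\varrho(\gamma))\bigr)=1$ on the grounds that $B$ is $\mathrm{Ad}$-invariant and that the adjoint action is ``determinant-one in the relevant components.'' The $\mathrm{Ad}$-invariance of the Killing form only places $\mathrm{Ad}(\varrho(\gamma))$ in the indefinite orthogonal group of $B$, which gives $\det=\pm 1$, and the sign $-1$ cannot be excluded by this reasoning alone --- the groups in question (e.g.\ $\mathrm{PSO}(n,n)$) are not obviously connected, so no soft continuity argument is available without further work, and you supply none. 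This matters because when a single lift $\widetilde{e}^p_j$ is replaced by $\gamma\cdot\widetilde{e}^p_j$, the factor $\det(\mathrm{Ad}_{\varrho(\gamma)})$ enters the torsion exactly once, to the power $(-1)^p$; unlike the $\mathcal{A}$-step and the conjugacy step (where the exponent is the even number $\chi(\Sigma)$, so a sign is harmless), here a value of $-1$ would genuinely change $\mathbb{T}$. The phrase you use to dismiss this is precisely the statement that has to be proved.

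The paper's proof devotes nearly all of its length to exactly this point, and it is where the \emph{purely loxodromic} hypothesis --- which your proposal never invokes, a telling omission --- is used. For each nontrivial $\gamma$, purely loxodromic gives $Q\varrho(\gamma)Q^{-1}=D=\mathrm{Diag}(\lambda_1,\ldots,\lambda_n,1/\lambda_1,\ldots,1/\lambda_n)$ (with an extra eigenvalue $1$ for $\mathrm{PSO}(n,n+1)$); then, using $DE_{ij}D^{-1}=(\lambda_i/\lambda_j)E_{ij}$ and explicit bases of $\mathfrak{sp}_{2n}(\mathbb{R})$, $\mathfrak{so}_{n,n}(\mathbb{R})$, and $\mathfrak{so}_{n,n+1}(\mathbb{R})$, the matrix of $\mathrm{Ad}_D$ is diagonal with entries $\lambda_i/\lambda_j$, $\lambda_i\lambda_j$, $1/(\lambda_i\lambda_j)$, $\lambda_i^{\pm 2}$, etc., which pair off so that the product is exactly $1$; hence $\det\mathrm{Ad}_{\varrho(\gamma)}=1$ by conjugation-invariance of the determinant. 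To repair your proof you must either reproduce this computation (or an equivalent one, e.g.\ a verified connectedness argument for the specific groups $\mathrm{PSp}(2n,\mathbb{R})$, $\mathrm{PSO}(n,n)$, $\mathrm{PSO}(n,n+1)$), rather than asserting the conclusion. Your conjugacy-class step, by contrast, is fine as sketched --- and in fact easier than you make it, since there the determinant appears to the even power $\chi(\Sigma)$ (the paper simply observes the twisted chain complexes of conjugate representations are isomorphic) --- so merging the two steps ``by the same mechanism'' obscures that only the lift step carries the real difficulty.
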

\begin{proof}
Let $\mathcal{A}'$ be another $B-$orthonormal basis of
$\mathcal{G}.$ From change-base-formula
(\ref{change-base-formula}) of Reidemeister torsion it follows that
\begin{align*}
\frac{\mathbb{T}(C_{\ast}(K;\mathcal{G}_{\mathrm{Ad}\circ
\varrho}),\{\mathbf{c}'_p\}_{p=0}^2,\{\mathbf{h}_p\}_{p=0}^2)}
{\mathbb{T}(C_{\ast}(K;\mathcal{G}_{\mathrm{Ad}\circ
\varrho}),\{\mathbf{c}_p\}_{p=0}^2,\{\mathbf{h}_p\}_{p=0}^2)}
=\det(T)^{-\chi(\Sigma)}.
\end{align*}
Here, $\mathbf{c}'_p=c_p\otimes_{\varrho} \mathcal{A}',$ $T$ is the
change-base-matrix from $\mathcal{A}'$ to $\mathcal{A},$ and $\chi$
is the Euler characteristic.

Note that since $\mathcal{A},$ $\mathcal{A}'$ are $B-$orthonormal
bases of $\mathcal{G},$ $\det T$ is $\pm 1.$ The
independence of the Reidemeister torsion from $B-$orthonormal basis
$\mathcal{A}$ follows from the fact that the Euler-characteristic
$\chi(\Sigma)$  of $\Sigma$ is even.

Next, let us fix $\gamma\in\pi_1(\Sigma).$ Assume
$c'_p=\{\widetilde{e}_1^p\cdot\gamma,\widetilde{e}_2^p,\ldots,\widetilde{e}_{m_p}^p\}$
is also a lift of $\{ e_1^p,\ldots,e_{m_p}^p\},$ where only another
lift of $e_1^p$ is considered and  the others are kept the same.
From the tensor product property it follows that
$\widetilde{e}_1^p\cdot\gamma\otimes t=\widetilde{e}_1^p\otimes
\mathrm{Ad}_{\varrho(\gamma)}(t).$ By change-base-formula
(\ref{change-base-formula}), we have
\begin{align*}
\frac{\mathbb{T}(C_{\ast}(K;\mathcal{G}_{\mathrm{Ad}\circ
\varrho}),\{\mathbf{c}'_p\}_{p=0}^2,\{\mathbf{h}_p\}_{p=0}^2)}{\mathbb{T}(C_{\ast}(K;\mathcal{G}_{\mathrm{Ad}\circ
\varrho}),\{\mathbf{c}_p\}_{p=0}^2, \{\mathbf{h}_p\}_{p=0}^2)}
=\det(A).
\end{align*}
Here, $\mathbf{c}_p=c_p\otimes_{\varrho} \mathcal{A},$
$\mathbf{c}'_p=c'_p\otimes_{\varrho} \mathcal{A},$ and $A$ denotes
the matrix of
$\mathrm{Ad}_{\varrho(\gamma)}:\mathcal{G}\to
\mathcal{G}$ with respect to basis $\mathcal{A}$.

To compute the determinant of the matrix of
$\mathrm{Ad}_{\varrho(\gamma)},$ let us consider the basis
$$
\mathcal{B}_{\mathfrak{sp}_{2n}(\mathbb{R})}=\left\{
  \begin{array}{ll}
    E_{ii}-E_{n+i,n+i}, & 1\leq i\leq n, \\
    E_{ij}-E_{n+j,n+i}, & 1\leq i\neq j\leq n,\\
E_{i,n+i}, & 1\leq i\leq n,\\
E_{n+i,i}, & 1\leq i\leq n,\\
E_{i,n+j}+E_{j,n+i}, & 1\leq i<j\leq n,\\
E_{n+i,j}+E_{n+j,i}, & 1\leq i<j\leq n
  \end{array}
\right.
$$
$$
\mathcal{B}_{\mathfrak{so}_{n,n}(\mathbb{R})}=\left\{
  \begin{array}{ll}
    E_{ij}-E_{n+j,n+i}, & 1\leq i\neq j\leq n, \\
    E_{ii}-E_{n+i,n+i}, & 1\leq i\leq n,\\
E_{i,n+j}-E_{j,n+i}, & 1\leq i<j\leq n,\\
E_{n+i,j}-E_{n+j,i}, & 1\leq i<j\leq n,
  \end{array}
\right.
$$
and 
$$
\mathcal{B}_{\mathfrak{so}_{n,n+1}(\mathbb{R})}=\left\{
  \begin{array}{ll}
    E_{ii}-E_{n+i,n+i}, & 2\leq i\leq n+1, \\
    E_{1,n+i+1}-E_{i+1,1}, & 1\leq i\leq n,\\
E_{1,i+1}-E_{n+i+1,1}, & 1\leq i \leq n,\\
E_{i+1,j+1}-E_{n+j+1,n+i+1}, & 1\leq i\neq j\leq n,\\
E_{i+1,n+j+1}-E_{j+1,n+i+1}, & 1\leq i\neq j\leq n,\\
E_{i+n+1,j+1}-E_{j+n+1,i+1}, & 1\leq j\neq i\leq n
  \end{array}
\right.
$$
of $\mathfrak{sp}_{2n}(\mathbb{R}),$ $\mathfrak{so}_{n,n}(\mathbb{R}),$ and $\mathfrak{so}_{n,n+1}(\mathbb{R}),$ respectively. Here,  $E_{ij}$ denotes the
matrix with $1$ in the $ij$ entry and $0$ elsewhere. 

By the
assumption that $\varrho$ is purely loxodromic, we have for each $\gamma\in
\pi_1(\Sigma)$ there is $Q=Q(\gamma)\in
G$ such that
$Q\varrho(\gamma)Q^{-1}=D=\text{Diag}(\lambda_1,\ldots,\lambda_{m}).$
Here, $m$ is equal to $2n$ for $G\in\{\text{PSp}(2n,\mathbb{R}),\text{PSO}(n,n)\},$ 
and for $G=\text{PSO}(n,n+1),$ $m=2n+1.$

Note that by the spectral properties of such diagonalizable  matrices we have
$D=\text{Diag}(\lambda_1,\ldots,\lambda_{n},1/\lambda_1,\ldots,1/\lambda_n)$ for $G\in\{\text{PSp}(2n,\mathbb{R}),\text{PSO}(n,n),\}$ and 
$D=\text{Diag}(1,\lambda_1,\ldots,\lambda_{n},1/\lambda_1,\ldots,1/\lambda_n)$ for $G=\text{PSO}(n,n+1).$
Note also that 
$$DE_{ij}D^{-1}=\frac{\lambda_i}{\lambda_j}E_{ij}.
$$

From this it follows that the matrix of $\mathrm{Ad}_D$ in the basis
$\mathcal{B}_{\mathfrak{sp}_{2n}(\mathbb{R})}$ is the diagonal matrix with the diagonal entries
$$\left\{
  \begin{array}{ll}
1,&1\leq i\leq n\\
 \frac{\lambda_i}{\lambda_j},& 1\leq i\neq
j\leq n\\
  \lambda^2_i,&1\leq i\leq n\\
   \frac{1}{\lambda^2_i},& 1\leq
i\leq n\\
  \lambda_i\lambda_j,&1\leq i<j\leq
n\\
\frac{1}{\lambda^2_i},&1\leq i\leq n\\ 
\frac{1}{\lambda_i\lambda_j},&1\leq i<j\leq n,
\end{array}
\right.
$$
in the basis $\mathcal{B}_{\mathfrak{so}_{n,n}(\mathbb{R})}$ is the diagonal matrix with the diagonal entries
$$\left\{
  \begin{array}{ll}
  \frac{\lambda_i}{\lambda_j},&1\leq i\neq
j\leq n\\
 1,&1\leq i\leq n\\
  \lambda_i\lambda_j,&1\leq i<
j\leq n\\
 \frac{1}{\lambda_i\lambda_j},&1\leq i<
j\leq n,
\end{array}
\right.
$$
in the basis $\mathcal{B}_{\mathfrak{so}_{n,n+1}(\mathbb{R})}$ is the diagonal matrix with the diagonal entries
$$\left\{
  \begin{array}{ll}
  1,&2\leq i \leq n+1\\
   \lambda_{i+1},& 1\leq i\leq n\\
   1/\lambda_{i+1},& 1\leq i\leq n\\
   \frac{\lambda_{i+1}}{\lambda_{j+1}},&1\leq i\neq j\leq n\\
   \lambda_{i+1}\lambda_{j+1},&1\leq i<j\leq n\\\frac{1}{\lambda_{i+1}\lambda_{j+1}},& 1\leq j<i\leq n.
\end{array}
\right.
$$

Note that the
determinant of these diagonal matrices is $1$. Hence, we proved the
independence of the Reidemeister torsion from the lifts.

By the fact that the twisted chains and cochains for conjugate
representations are isomorphic, we also have the independence of
Reidemeister torsion from conjugacy class of $\varrho.$

This is the end of proof
Proposition~\ref{Well-definitenessOfRiedemeisterTorsionOfRepresentation}.
\end{proof}

Let us continue with the following well known result which will be used in the proof of our main theorem (Theorem \ref{MainThm}). For the sake of completeness, we will also give the proof of this auxilary result in details.
\begin{lem}\label{AuxilaryResult}
Let $f:V\times V\to \mathbb{R}$ be a non-degenarate, anti-symmetric  bilinear map on the 
real vector space $V$ of dimension $2n.$ Let $\{ v_1,\ldots,v_{2n} \}$ be a basis of $V$ and let $\{ v^1,\ldots,v^{2n} \}$
be the corresponding dual basis, namely $v^i(v_j)=\delta_{ij}.$ Let $f^{\ast }:V^{\ast }\times V^{\ast } \to \mathbb{R}$ be the dual bilinear map of $f,$ which is defined by $f^{\ast }(v^i,v^j):=f(v_i,v_j).$ If $G(f;\{ v_1,\ldots,v_{2n}\})$ denotes the Gram matrix of $f$ in the basis $\{ v_1,\ldots,v_{2n}\},$ then  $G(f^{\ast };\{ v^1,\ldots,v^{2n}\})G(f;\{ v_1,\ldots,v_{2n}\})^{\mathrm{T}}=\mathrm{I}_{2n\times 2n}.$ Here, $\mathrm{I}_{2n\times 2n }$ is the $n\times n$ identity matrix and ``$\mathrm{T} $'' denotes the transpose of a matrix.
\end{lem}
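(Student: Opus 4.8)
The plan is to reduce the identity to a short computation with the Gram matrix $G:=G(f;\{v_1,\dots,v_{2n}\})$, whose $(i,j)$ entry is $G_{ij}=f(v_i,v_j)$. Two structural facts are all that will be used: non-degeneracy of $f$ makes $G$ invertible, and anti-symmetry of $f$ gives $G^{\mathrm T}=-G$ (although, as will become clear, only invertibility is needed for the final identity).

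First I would make the isomorphism $\flat\colon V\to V^{\ast}$, $\flat(v)=f(v,\cdot)$, explicit in coordinates. Expanding $\flat(v_j)=\sum_i f(v_j,v_i)\,v^i=\sum_i G_{ji}\,v^i$ shows that $\flat$ is represented by the matrix $G^{\mathrm T}$ with respect to $\{v_j\}$ and the dual basis $\{v^i\}$, hence is invertible; its inverse is represented by $(G^{\mathrm T})^{-1}=(G^{-1})^{\mathrm T}$, so that $\flat^{-1}(v^i)=\sum_k (G^{-1})_{ik}\,v_k$. The dual bilinear map is then the form transported by $\flat$, namely $f^{\ast}(\alpha,\beta)=f(\flat^{-1}\alpha,\flat^{-1}\beta)$, which is the standard meaning of the dual pairing induced on $V^{\ast}$ by the non-degenerate $f$.

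Next I would compute the Gram matrix of $f^{\ast}$ in the dual basis. Substituting the above expansions into $f^{\ast}(v^i,v^j)=f(\flat^{-1}v^i,\flat^{-1}v^j)$ and using bilinearity gives $f^{\ast}(v^i,v^j)=\sum_{k,l}(G^{-1})_{ik}\,G_{kl}\,(G^{-1})_{jl}=\sum_l \delta_{il}(G^{-1})_{jl}=(G^{-1})_{ji}$, so that $G(f^{\ast};\{v^i\})=(G^{-1})^{\mathrm T}$. The assertion now follows at once: $G(f^{\ast};\{v^i\})\,G(f;\{v_i\})^{\mathrm T}=(G^{-1})^{\mathrm T}G^{\mathrm T}=(G\,G^{-1})^{\mathrm T}=\mathrm I_{2n\times 2n}$.

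Since everything is elementary linear algebra, I do not expect a genuine obstacle; the only place demanding attention is the bookkeeping of transposes, namely recognizing that $\flat$ is represented by $G^{\mathrm T}$ rather than $G$, and that the Gram matrix of $f^{\ast}$ emerges as $(G^{-1})^{\mathrm T}=(G^{\mathrm T})^{-1}$ and not $G^{-1}$. For use in the proof of Theorem~\ref{MainThm} it is also worth recording, via $G^{\mathrm T}=-G$, that $G^{-1}$ (equivalently the Gram matrix of $f^{\ast}$) is again anti-symmetric, which is the shape in which the duality between the symplectic pairings on homology and cohomology will be applied.
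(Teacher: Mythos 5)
Your proof is correct, but it takes a genuinely different route from the paper's. You work directly in the given basis: you represent the musical isomorphism $\flat(v)=f(v,\cdot)$ by the matrix $G^{\mathrm{T}}$, define $f^{\ast}$ as the form transported through $\flat^{-1}$, and compute $G(f^{\ast};\{v^i\})=(G^{-1})^{\mathrm{T}}$, after which the identity $(G^{-1})^{\mathrm{T}}G^{\mathrm{T}}=(GG^{-1})^{\mathrm{T}}=\mathrm{I}$ is immediate; your transpose bookkeeping checks out at every step. The paper instead argues by normal form: it picks a symplectic basis $\{e_i\}$ in which both Gram matrices equal $\left(\begin{smallmatrix}0&\mathrm{I}\\-\mathrm{I}&0\end{smallmatrix}\right)$, verifies the identity there, and transports it to $\{v_i\}$ using the change-of-basis matrices $L$ (on $V$) and $M$ (on $V^{\ast}$) together with the contragredient relation $LM^{\mathrm{T}}=\mathrm{I}$ and the congruence rules $G(f;\{v_i\})=L^{\mathrm{T}}G(f;\{e_i\})L$, $G(f^{\ast};\{v^i\})=M^{\mathrm{T}}G(f^{\ast};\{e^i\})M$. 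One point worth making explicit: the lemma's displayed definition $f^{\ast}(v^i,v^j):=f(v_i,v_j)$, read literally in an \emph{arbitrary} basis, would force $G(f^{\ast};\{v^i\})=G$ and make the conclusion false (it would assert $GG^{\mathrm{T}}=\mathrm{I}$); that relation actually holds only in special bases such as symplectic ones, which is how the paper's proof implicitly reads it when it asserts $G(f^{\ast};\{e^i\})$ equals the standard symplectic matrix. Your canonical, basis-free definition of $f^{\ast}$ via $\flat$ is the correct reading (and is what the duality between $\omega_B$ and the intersection pairing in Theorem~\ref{MainThm} requires), so your argument is in this respect more self-contained than the paper's. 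It is also more general, since you never use anti-symmetry, only non-degeneracy; your closing observation that $G^{\mathrm{T}}=-G$ forces $(G^{-1})^{\mathrm{T}}=-G^{-1}$, so the dual Gram matrix is again anti-symmetric, is likewise correct and is the form in which the lemma gets applied.
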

\begin{proof}
Let us first note that there is a symplectic basis $\{ e_1,\ldots,e_{2n} \}$ of $V$ so that the Gram matrix $G(f;\{ e_1,\ldots,e_{n},e_{n+1},\ldots,e_{2n} \})$ and $G(f^{\ast };\{ e^1,\ldots,e^{n},e^{n+1},\ldots,e^{2n} \})$ are both equal to
$\left(
\begin{array}{cc}
  0_{n\times n} & \mathrm{I}_{n\times n} \\
  -\mathrm{I}_{n\times n} & 0_{n\times n} \\
\end{array}
\right).$ 
Thus, we have

$$G(f^{\ast };\{ e^1,\ldots,e^{n},e^{n+1},\ldots,e^{2n}\})G(f;\{ e_1,\ldots,e_{n},e_{n+1},\ldots,e_{2n}\})^{\mathrm{T}}=I_{2n\times 2n}.$$
  
If $L$ is the change-base matrix from  basis $\{ e_1,\ldots,e_{2n}\}$ to $\{ v_1,\ldots,v_{2n}\}$ of $V$ and if $M$ is the change-base matrix from  basis $\{ e^1,\ldots,e^{2n}\}$ to $\{ v^1,\ldots,v^{2n}\}$ of $V^{\ast },$ then clearly we have $LM^{\mathrm{T}}=\mathrm{I}_{2n\times 2n }.$ Note also that 
$$G(f;\{ v_1,\ldots,v_{n},v_{n+1},\ldots,v_{2n} \})=L^{\mathrm{T}}G(f;\{ e_1,\ldots,e_{n},e_{n+1},\ldots,e_{2n} \})L,$$
$$G(f^{\ast };\{ v^1,\ldots,v^{n},v^{n+1},\ldots,v^{2n} \})=M^{\mathrm{T}}G(f^{\ast };\{ e^1,\ldots,e^{n},e^{n+1},\ldots,e^{2n} \})M.$$
From these it follows that
 $G(f^{\ast };\{ v^1,\ldots,v^{2n}\})G(f;\{ v_1,\ldots,v_{2n}\})^{\mathrm{T}}=\mathrm{I}_{2n\times 2n}.$
  
\noindent   This finishes the proof of Lemma \ref{AuxilaryResult}.
\end{proof}

\begin{thm}\label{MainThm}
Assume that $\Sigma$ is a closed orientable surface of genus $g\geq
2$  and  $\varrho:\pi_1(\Sigma)\to G$
is an irreducible, purely loxodromic representation. Assume also $K$
is a cell-decomposition of $\Sigma,$  $\mathbf{c}_p$ is the
geometric bases of
$C_p(K;\mathcal{G}_{\mathrm{Ad}\circ \varrho}),$
$p=0,1,2,$ and $\mathbf{h}_1$ is a basis for
$H_1(\Sigma;\mathcal{G}_{\mathrm{Ad}\circ
\varrho}).$ Then, the following formula is valid:
$$
\mathbb{T}(C_{\ast}(K;\mathcal{G}_{\mathrm{Ad}\circ
\varrho}),\{\mathbf{c}_p\}_{p=0}^2,\{0,\mathbf{h}_1,0\})=
\sqrt{\det\Omega_{\omega_B}}, $$ where $ \omega_B :
H^1(\Sigma;\mathcal{G}_{\mathrm{Ad}\circ
\varrho})\times
H^1(\Sigma;\mathcal{G}_{\mathrm{Ad}\circ
\varrho})\stackrel{\smile_B}{\longrightarrow}
H^2(\Sigma;\mathbb{R})\stackrel{\int_{\Sigma}}{\longrightarrow}\mathbb{R}
$ is the Atiyah-Bott-Goldman symplectic form for the Lie group $G,$  $\Omega_{\omega_B}$ is
the matrix of  $\omega_B$
in the basis $\mathbf{h}^1,$
 and where $\mathbf{h}^1$ is the
Poincar\'{e} dual basis of
$H^1(\Sigma;\mathcal{G}_{\mathrm{Ad}\circ
\varrho})$ corresponding to $\mathbf{h}_1$ of
$H_1(\Sigma;\mathcal{G}_{\mathrm{Ad}\circ
\varrho}).$
\end{thm}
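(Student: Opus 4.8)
The plan is to exhibit $C_{\ast}(K;\mathcal{G}_{\mathrm{Ad}\circ\varrho})$ as a real-symplectic chain complex of length $2$ and then to read off the torsion from Theorem~\ref{Tor_Of_General_Symplectic} with $n=1$. First I would build the pairings $\omega_{p,2-p}\colon C_p\times C_{2-p}\to\mathbb{R}$, $p=0,1,2$, by combining the chain-level intersection pairing between $K$ and its dual cell-decomposition $K^{\ast}$ with the Killing form $B$ on $\mathcal{G}$; concretely, on decomposable twisted chains one sets $\omega_{p,2-p}(\widetilde{e}\otimes s,\widetilde{f}\otimes t)$ equal to the algebraic intersection number of the underlying cells times $B(s,t)$. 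The $\mathrm{Ad}$-invariance of $B$ makes this well defined on the $\otimes_{\varrho}$ quotient, the symmetry of $B$ together with the graded (anti)symmetry of the intersection pairing gives $\omega_{p,2-p}(a,b)=(-1)^{p}\omega_{2-p,p}(b,a)$, and the chain-level adjunction between the boundary operators on $K$ and $K^{\ast}$ gives the $\partial$-compatibility $\omega_{p,2-p}(\partial a,b)=(-1)^{p+1}\omega_{p+1,1-p}(a,\partial b)$. Thus $C_{\ast}(K;\mathcal{G}_{\mathrm{Ad}\circ\varrho})$ is a symplectic chain complex and the induced pairing $[\omega_{1,1}]$ on $H_1$ is non-degenerate and anti-symmetric.

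Next I would compute the end homologies. Since $\varrho$ is irreducible, the infinitesimal centralizer of its image vanishes, so $H^{0}(\Sigma;\mathcal{G}_{\mathrm{Ad}\circ\varrho})=\mathcal{G}^{\mathrm{Ad}\circ\varrho(\pi_1(\Sigma))}=0$; Poincar\'{e} duality for the closed oriented surface then forces $H_2=0$, and dually $H_0=0$. This is exactly why the prescribed homology bases are $\{0,\mathbf{h}_1,0\}$. Applying Theorem~\ref{Tor_Of_General_Symplectic} with $2n=2$, the product collapses to
$$
\mathbb{T}\bigl(C_{\ast}(K;\mathcal{G}_{\mathrm{Ad}\circ\varrho}),\{\mathbf{c}_p\}_{p=0}^{2},\{0,\mathbf{h}_1,0\}\bigr)=\Delta_{0,2}(C_{\ast})\cdot\sqrt{\Delta_{1,1}(C_{\ast})}^{\,-1}=\frac{1}{\sqrt{\Delta_{1,1}(C_{\ast})}},
$$
because $\Delta_{0,2}(C_{\ast})=1$ is the determinant of the empty pairing between the trivial spaces $H_0$ and $H_2$, while $\Delta_{1,1}(C_{\ast})$ is the Gram determinant of the intersection pairing $[\omega_{1,1}]$ on $H_1$ in the basis $\mathbf{h}_1$.

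The decisive step is to identify $\Delta_{1,1}(C_{\ast})$ with the reciprocal of $\det\Omega_{\omega_B}$. Under Poincar\'{e} duality the intersection pairing $f:=[\omega_{1,1}]$ on $H_1(\Sigma;\mathcal{G}_{\mathrm{Ad}\circ\varrho})$ is carried to the cup-product-and-Killing-form pairing $\omega_B$ on $H^1(\Sigma;\mathcal{G}_{\mathrm{Ad}\circ\varrho})$, and by hypothesis $\mathbf{h}^1$ is the Poincar\'{e} dual of $\mathbf{h}_1$; hence $\omega_B$ in the basis $\mathbf{h}^1$ is the dual bilinear form of $f$ in the sense of Lemma~\ref{AuxilaryResult}, with $V=H_1$ of even real dimension $(2g-2)\dim\mathcal{G}$. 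Lemma~\ref{AuxilaryResult} then yields $G(\omega_B;\mathbf{h}^1)\,G(f;\mathbf{h}_1)^{\mathrm{T}}=\mathrm{I}$, and taking determinants (using $\det G^{\mathrm{T}}=\det G$) gives $\det\Omega_{\omega_B}\cdot\Delta_{1,1}(C_{\ast})=1$. Substituting into the displayed formula produces
$$
\mathbb{T}\bigl(C_{\ast}(K;\mathcal{G}_{\mathrm{Ad}\circ\varrho}),\{\mathbf{c}_p\}_{p=0}^{2},\{0,\mathbf{h}_1,0\}\bigr)=\frac{1}{\sqrt{\Delta_{1,1}(C_{\ast})}}=\sqrt{\det\Omega_{\omega_B}},
$$
as asserted.

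I expect the main obstacle to be the identification in the last paragraph: one must verify that the chain-level intersection pairing twisted by $B$ really descends, through Poincar\'{e} duality, to the Atiyah-Bott-Goldman form $\omega_B$ on cohomology, and that the sign and normalization conventions are arranged so that $\omega_B$ in the dual basis $\mathbf{h}^1$ is genuinely the dual form of $[\omega_{1,1}]$ rather than a scalar multiple of it. Once this compatibility is pinned down, Lemma~\ref{AuxilaryResult} supplies the needed reciprocity and the theorem follows.
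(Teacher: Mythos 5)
Your overall skeleton---killing $H_0$ and $H_2$ by irreducibility, collapsing Theorem~\ref{Tor_Of_General_Symplectic} with $n=1$ to $\Delta_{0,2}\cdot\sqrt{\Delta_{1,1}}^{\,-1}$, and converting $\Delta_{1,1}(C_\ast)^{-1}$ into $\det\Omega_{\omega_B}$ via Lemma~\ref{AuxilaryResult}---is exactly the paper's, but your first step contains a genuine gap: $C_{\ast}(K;\mathcal{G}_{\mathrm{Ad}\circ \varrho})$ is \emph{not} a symplectic chain complex, and the chain-level pairing you propose does not exist. Two cells of the \emph{same} cell-decomposition $K$ are not in general position (a $p$-cell and a $(2-p)$-cell may share boundary or touch non-transversally), so ``the algebraic intersection number of the underlying cells'' is undefined within $K$ alone; the chain-level intersection pairing (\ref{intersection form}) is only defined between $C_k(K;\mathcal{G}_{\mathrm{Ad}\circ \varrho})$ and $C_{2-k}(K';\mathcal{G}_{\mathrm{Ad}\circ \varrho})$ for the \emph{dual} decomposition $K'$, i.e. it pairs two different complexes. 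There is also a dimension obstruction: non-degeneracy of $\omega_{0,2}$ on $C_0\times C_2$ would force the number of $0$-cells of $K$ to equal the number of $2$-cells, which fails for a general $K$ (e.g. any triangulation); it is precisely the dual decomposition that makes the mixed pairing perfect, since $\dim C_p(K')=\dim C_{2-p}(K)$. So Theorem~\ref{Tor_Of_General_Symplectic} cannot be applied to $C_\ast$ directly as you do.

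The missing idea is the paper's doubling trick. Set $D_{\ast}=C_{\ast}\oplus C'_{\ast}$ with $C'_{\ast}=C_{\ast}(K';\mathcal{G}_{\mathrm{Ad}\circ \varrho})$, define $\omega_{p,2-p}$ on $D_p\times D_{2-p}$ by the intersection form between the $K$- and $K'$-summands and by zero on $C_p\times C_{2-p}$ and $C'_p\times C'_{2-p}$; then $D_{\ast}$ genuinely is a symplectic chain complex, and Theorem~\ref{Tor_Of_General_Symplectic} gives $\mathbb{T}(D_{\ast},\{\mathbf{c}_p\oplus\mathbf{c}'_p\},\{0,\mathbf{h}_1\oplus\mathbf{h}_1,0\})=\sqrt{\Delta_{1,1}(D_{\ast})}^{\,-1}=\Delta_{1,1}(C_{\ast})^{-1}$, because the induced pairing on $H_1(D_{\ast})$ is block anti-diagonal in $(\cdot,\cdot)_{1,1}$. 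On the other hand, the short-exact sequence $0\to C_{\ast}\to D_{\ast}\to C'_{\ast}\to 0$ with compatible bases, Lemma~\ref{sumlemma}, and the independence of R-torsion from the cell-decomposition (so that $\mathbb{T}(C'_{\ast})$, computed with the same $\mathbf{h}_1$, equals $\mathbb{T}(C_{\ast})$) yield $\mathbb{T}(D_{\ast})=\mathbb{T}(C_{\ast})^2$; taking square roots recovers your identity $\mathbb{T}(C_{\ast},\{\mathbf{c}_p\}_{p=0}^2,\{0,\mathbf{h}_1,0\})=\sqrt{\Delta_{1,1}(C_{\ast})}^{\,-1}$. From that point on your argument coincides with the paper's: the commutative diagram relating $\smile_B$ and $(\cdot,\cdot)_{1,1}$ under Poincar\'{e} duality shows $\omega_B$ in the basis $\mathbf{h}^1$ is the dual form of the intersection pairing in $\mathbf{h}_1$, and Lemma~\ref{AuxilaryResult} gives $\det\Omega_{\omega_B}\cdot\Delta_{1,1}(C_{\ast})=1$. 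In short, the last two-thirds of your proposal is sound, but the claim that $C_{\ast}$ itself carries the symplectic structure must be replaced by the $D_{\ast}=C_{\ast}\oplus C'_{\ast}$ construction together with Milnor's multiplicativity.
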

\begin{proof}
By the invariance of the Cartan-Killing form $B$ of
$\mathcal{G}$ under conjugation, for $k=0,1,2,$
we have the non-degenerate form, the \emph{Kronecker pairing,}
$$<\cdot,\cdot>:C^k(K;\mathcal{G}_{\mathrm{Ad}\circ
\varrho})\times
C_k(K;\mathcal{G}_{\mathrm{Ad}\circ \varrho})\to
\mathbb{R}$$ defined by $B(t,\theta(\sigma)),$ $\theta\in
C^k(K;\mathcal{G}_{\mathrm{Ad}\circ \varrho}),$
$\sigma\otimes_\varrho t \in
C_k(K;\mathcal{G}_{\mathrm{Ad}\circ \varrho}).$
It is extended to
$$<\cdot,\cdot>:H^k(\Sigma;\mathcal{G}_{\mathrm{Ad}\circ
\varrho})\times
H_k(\Sigma;\mathcal{G}_{\mathrm{Ad}\circ
\varrho})\to \mathbb{R}.$$

Invariance of $B$ under the conjugation and the non-degeneracy of
$B$ yield the cup product
$$\smile_B:C^k(K;\mathcal{G}_{\mathrm{Ad}\circ
\varrho})\times C^{\ell
}(K;\mathcal{G}_{\mathrm{Ad}\circ \varrho})\to
C^{k+{\ell }}(K;\mathbb{R})$$ defined by
$(\theta_k\smile_B\theta_{\ell })(\sigma_{k+{\ell }})=
B(\theta_k((\sigma_{k+{\ell }})_\mathrm{front})),\theta_{\ell
}((\sigma_{k+{\ell }})_\mathrm{back}).$ Clearly, $\smile_B$ has the
extension $$
  \smile_B   :  H^k(\Sigma;\mathcal{G}_{\mathrm{Ad}\circ \varrho})   \times   H^{\ell}(\Sigma;\mathcal{G}
  _{\mathrm{Ad}\circ \varrho})   \to   H^{k+\ell}(\Sigma;\mathbb{R}).
$$

Let us denote by $K'$ the dual cell-decomposition of $\Sigma$
corresponding to the cell decomposition $K.$ Suppose that cells
$\sigma\in K,$ $\sigma'\in K'$ meet at most once and also the
diameter of each cell is less than, say, half of the injectivity
radius of $\Sigma.$ By the fact that the Reidemeister torsion is
invariant under subdivision, this assumption is not loss of
generality. Let $c'_p$ be the basis of
$C_p(\widetilde{K'};\mathbb{Z})$ corresponding to the basis $c_p$ of
$C_p(\widetilde{K};\mathbb{Z}),$ and let
$\mathbf{c}'_p=c'_p\otimes_{\varrho} \mathcal{A}$ be the
corresponding basis for
$C_p(K';\mathcal{G}_{\mathrm{Ad}\circ \varrho}).$

We have the intersection form
\begin{equation}\label{intersection form}
(\cdot,\cdot)_{k,2-k}:C_k(K;\mathcal{G}_{\mathrm{Ad}\circ
\varrho})\times
C_{2-k}(K';\mathcal{G}_{\mathrm{Ad}\circ
\varrho})\to\mathbb{R}
\end{equation}
 defined by $$(\sigma_1\otimes
t_1,\sigma_2\otimes t_2)_{k,2-k}=\sum_{\gamma\in\pi_1(\Sigma)}
\sigma_1.(\gamma\cdot\sigma_2)\; B(t_1,\gamma\cdot t_2),$$ where
$``."$ is the intersection number pairing. Note that
$(\cdot,\cdot)_{k,2-k}$ are $\partial-$compatible because the
intersection number pairing $``."$ is compatible with the usual
boundary operator in the sense $(\partial\alpha).\beta =
(-1)^{|\alpha|}\alpha .(\partial\beta),$ where $|\alpha|$ is the
dimension of the cell $\alpha.$ Since the intersection number form
$``."$ is anti-symmetric  and $B$ is invariant under adjoint action,
then $(\cdot,\cdot)_{k,2-k}$ is anti-symmetric.

By the independence of the twisted homologies from the
cell-decomposition, we get the non-degenerate anti-symmetric form
\begin{equation}\label{intersection formH}
(\cdot,\cdot)_{k,2-k}:H_k(\Sigma;\mathcal{G}_{\mathrm{Ad}\circ
\varrho})\times
H_{2-k}(\Sigma;\mathcal{G}_{\mathrm{Ad}\circ
\varrho})\to \mathbb{R}.
\end{equation}
Combining the isomorphisms induced by the Kronecker pairing and the
intersection form, we obtain the Poincar\'{e} duality isomorphisms
$$
\mathrm{PD}:H_k(\Sigma;\mathcal{G}_{\mathrm{Ad}\circ
\varrho})\stackrel{}{\cong}
H_{2-k}(\Sigma;\mathcal{G}_{\mathrm{Ad}\circ
\varrho})^{\ast}\stackrel{}{\cong}
H^{2-k}(\Sigma;\mathcal{G}_{\mathrm{Ad}\circ
\varrho}).$$ Thus, for $k=0,1,2,$ we have the following commutative
diagram
\begin{equation*}
\begin{array}{ccccc}
  H^{2-k}(\Sigma;\mathcal{G}_{\mathrm{Ad}\circ \varrho}) & \times & H^k(\Sigma;\mathcal{G}_{\mathrm{Ad}\circ \varrho}) & \stackrel{\smile_B}{\longrightarrow} & H^2(\Sigma;\mathbb{R}) \\
  \big\uparrow \small\mathrm{PD}&   & \big\uparrow\small\mathrm{PD} & \circlearrowleft & \big\uparrow \\
  H_k(\Sigma;\mathcal{G}_{\mathrm{Ad}\circ \varrho}) & \times & H_{2-k}(\Sigma;\mathcal{G}_{\mathrm{Ad}\circ \varrho}) & \stackrel{(,)_{k,2-k}}{\longrightarrow} & \mathbb{R}.
\end{array}
\end{equation*}
Here, the isomorphism $\mathbb{R}\to H^2(\Sigma;\mathbb{R})$ sends
$1$ to the fundamental class of $H^2(\Sigma;\mathbb{R}).$

By the irreducibility of $\varrho,$ we have
$H_0(\Sigma;\mathcal{G}_{\mathrm{Ad}\circ
\varrho}),$
$H_2(\Sigma;\mathcal{G}_{\mathrm{Ad}\circ
\varrho}),$
$H^0(\Sigma;\mathcal{G}_{\mathrm{Ad}\circ
\varrho}),$ and
$H^2(\Sigma;\mathcal{G}_{\mathrm{Ad}\circ
\varrho})$ are all zero. Hence,
\begin{equation}\label{diagram}
\begin{array}{ccccc}
  H^{1}(\Sigma;\mathcal{G}_{\mathrm{Ad}\circ \varrho}) & \times & H^1(\Sigma;\mathcal{G}_{\mathrm{Ad}\circ \varrho}) & \stackrel{\smile_B}{\longrightarrow} & H^2(\Sigma;\mathbb{R}) \\
  \big\uparrow \small\mathrm{PD}&   & \big\uparrow\small\mathrm{PD} & \circlearrowleft & \big\uparrow \\
  H_1(\Sigma;\mathcal{G}_{\mathrm{Ad}\circ \varrho}) & \times & H_{1}(\Sigma;\mathcal{G}_{\mathrm{Ad}\circ \varrho}) & \stackrel{(,)_{1,1}}{\longrightarrow} & \mathbb{R}.
\end{array}
\end{equation}

Recall that $ \omega_B :
H^1(\Sigma;\mathcal{G}_{\mathrm{Ad}\circ
\varrho})\times
H^1(\Sigma;\mathcal{G}_{\mathrm{Ad}\circ
\varrho})\stackrel{\smile_B}{\longrightarrow}
H^2(\Sigma;\mathbb{R})\stackrel{\int_{\Sigma}}{\longrightarrow}\mathbb{R}
$
is called the Atiyah-Bott-Goldman symplectic form for the Lie group $G.$ Note also that from equation (\ref{diagram}), $ \omega_B $ is nothing but the dual of the intersection pairing $(,)_{1,1}.$\\

Let $C_{p}=C_p(K;\mathcal{G}_{\mathrm{Ad}\circ
\varrho}),$
$C'_{p}=C_p(K';\mathcal{G}_{\mathrm{Ad}\circ
\varrho}),$ and $D_{p}=C_{\ast}\oplus C'_{\ast}.$ Consider the
intersection form (\ref{intersection form}), define it on $C_p\times
C_{2-p}$ and $C'_p\times C'_{2-p}$ as $0.$ Let
$\omega_{p,2-p}:D_p\times D_{2-p}\to \mathbb{R}$  be defined by
using $(\cdot,\cdot)_{p,2-p}.$ Then, $D_{\ast}$ becomes a symplectic
chain complex. Note that $\omega_{1,1}:H_1(D_{\ast})\times
H_1(D_{\ast})\to \mathbb{R}$ is equal to
$\left(%
\begin{array}{cc}
  0&(\cdot,\cdot)_{1,1} \\
  -(\cdot,\cdot)_{1,1}&0
\end{array}%
\right)$ and $(\cdot,\cdot)_{1,1}$ is  the intersection form
(\ref{intersection formH}) for $k=1.$ Then, from Lemma
\ref{sumlemma}, Theorem~\ref{Tor_Of_General_Symplectic},
independence of the Reidemeister torsion from the cell-decomposition
of $\Sigma,$ and the fact that $D_{\ast}$ is a symplectic chain
complex it follows that
\begin{equation}\label{nby0}
 \mathbb{T}(D_{\ast},\{ \mathbf{c}_p\oplus
\mathbf{c}_p'\}_{p=0}^2,\{0\oplus 0,\mathbf{h}_1\oplus
\mathbf{h}_1,0\oplus 0\}) =\sqrt{\Delta_{1,1}(D_{\ast })}^{(-1)}.
\end{equation}

Since the intersection form (\ref{intersection formH}) for $k=1$ is
non-degenerate, then equation (\ref{nby0}) becomes
\begin{equation}\label{nby}
\mathbb{T}(D_{\ast},\{\mathbf{c}_p\oplus
\mathbf{c}'_p\}_{p=0}^2,\{0\oplus 0, \mathbf{h}_1\oplus
\mathbf{h}_1,0\oplus 0\})=\Delta_{1,1}(C_{\ast })^{(-1)}.
\end{equation}

Let us consider the short-exact sequence
$$0\to C_{\ast}\hookrightarrow D_{\ast}=C_{\ast}\oplus
C'_{\ast}\twoheadrightarrow C'_{\ast}\to 0.$$ Here,
$C_{\ast}\hookrightarrow D_{\ast}$ denotes the inclusion,
$D_{\ast}\twoheadrightarrow C'_{\ast}$  denotes the projection.
Clearly, the bases $\mathbf{c}_p$ of $C_p,$ $\mathbf{c}_p \oplus
\mathbf{c}_p'$ of $D_{\ast},$ and $\mathbf{c}_p'$ of $C'_{\ast}$ are
compatible. By Lemma \ref{sumlemma} and the independence of the
Reidemeister torsion from the cell-decomposition of $\Sigma,$ we
have \begin{equation}\label{z1} \mathbb{T}(D_{\ast},\{
\mathbf{c}_p\oplus \mathbf{c}_p'\}_{p=0}^2,\{0\oplus
0,\mathbf{h}_1\oplus \mathbf{h}_1,0\oplus
0\})=\left(\mathbb{T}(C_{\ast},\{\mathbf{c}_p\}_{p=0}^2,\{0,\mathbf{h}_1,0\})\;\right)^2.
\end{equation}

Thus, combining equations   (\ref{nby}) and (\ref{z1}), we obtain
\begin{equation}\label{nby1}\mathbb{T}(C_{\ast},\{\mathbf{c}_p\}_{p=0}^2,\{0,\mathbf{h}_1,0\})=\sqrt{\Delta_{1,1}(C_{\ast}) }^{(-1)}.
\end{equation}

The fact that $\smile_B$ is the dual of the intersection pairing $(,)_{1,1}$  and Lemma \ref{AuxilaryResult} yield that 
\begin{equation}\label{nby1}\mathbb{T}(C_{\ast},\{\mathbf{c}_p\}_{p=0}^2,\{0,\mathbf{h}_1,0\})=\sqrt{\det \Omega_{\omega_B} }.
\end{equation}

This concludes the proof of
Theorem \ref{MainThm}.
\end{proof}

\begin{cor}\label{cor}Since every Anosov representation is 1-1, discerete, irreducible, and purely loxodromic \cite{Labourie2006}, then Theorem \ref{MainThm} also holds for Anosov representations.
\end{cor}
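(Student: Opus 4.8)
The plan is to observe that the Corollary is an immediate application of Theorem~\ref{MainThm}, so the only real task is to certify that a $G$-Anosov representation satisfies the two standing hypotheses of that theorem. Theorem~\ref{MainThm} is stated for representations $\varrho\colon\pi_1(\Sigma)\to G$ that are \emph{irreducible} and \emph{purely loxodromic}; hence the entire content of the proof reduces to verifying these two properties for Anosov representations and then quoting the Main Theorem.

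First I would recall Labourie's structural results from \cite{Labourie2006}, which guarantee that every Anosov representation is injective, discrete, irreducible, and purely loxodromic. Injectivity and discreteness are not used here, but irreducibility is precisely the hypothesis invoked in the proof of Theorem~\ref{MainThm} to force $H_0, H_2, H^0, H^2$ of the adjoint bundle $\mathcal{G}_{\mathrm{Ad}\circ\varrho}$ to vanish, thereby collapsing the Poincar\'{e} duality diagram to the single non-degenerate pairing on $H_1$ that drives the torsion computation.

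Second, I would match the notion of pure loxodromicity with the one used in this paper. Proposition~\ref{Well-definitenessOfRiedemeisterTorsionOfRepresentation} and the proof of Theorem~\ref{MainThm} employ pure loxodromicity in the specific sense that for every non-trivial $\gamma\in\pi_1(\Sigma)$ the element $\varrho(\gamma)$ is conjugate inside $G$ to a diagonal matrix with distinct real eigenvalues (the form $D=\mathrm{Diag}(\lambda_1,\ldots,\lambda_n,1/\lambda_1,\ldots,1/\lambda_n)$, with an extra $1$ for $\mathrm{PSO}(n,n+1)$). I would therefore confirm that Labourie's ``purely loxodromic'' coincides with this real-spectrum, $G$-diagonalizable condition for $G\in\{\mathrm{PSp}(2n,\mathbb{R}),\mathrm{PSO}(n,n),\mathrm{PSO}(n,n+1)\}$, which is exactly what Proposition~\ref{Well-definitenessOfRiedemeisterTorsionOfRepresentation} needs to render the Reidemeister torsion well-defined.

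With both hypotheses in hand, Theorem~\ref{MainThm} applies verbatim to any Anosov $\varrho$ and yields
\[
\mathbb{T}\bigl(C_{\ast}(K;\mathcal{G}_{\mathrm{Ad}\circ\varrho}),\{\mathbf{c}_p\}_{p=0}^2,\{0,\mathbf{h}_1,0\}\bigr)=\sqrt{\det\Omega_{\omega_B}}.
\]
The only point requiring genuine care, and hence the main obstacle, is the second step: ensuring that Labourie's dynamical notion of ``purely loxodromic'' delivers exactly the algebraic spectral condition (real, simple eigenvalues together with $G$-diagonalizability in the prescribed symplectic/orthogonal form) used throughout the eigenvalue computations of Proposition~\ref{Well-definitenessOfRiedemeisterTorsionOfRepresentation}, rather than merely proximality on a single flag. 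Once that identification is recorded, the Corollary follows formally from Theorem~\ref{MainThm}.
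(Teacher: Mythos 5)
Your proposal matches the paper's argument exactly: the paper proves the Corollary in one line by citing Labourie \cite{Labourie2006} for the fact that Anosov representations are injective, discrete, irreducible, and purely loxodromic, and then applying Theorem~\ref{MainThm} verbatim. Your additional care in matching Labourie's dynamical notion of pure loxodromicity with the algebraic spectral condition used in Proposition~\ref{Well-definitenessOfRiedemeisterTorsionOfRepresentation} is a sensible elaboration of the same route, not a different one.
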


\section{Application:A Volume element on some Hitchin components}
For a closed oriented Riemann surface $\Sigma$ with genus $g>1$ and
a semi-simple Lie group $G,$ let us denote by
$\mathrm{Hom}(\pi_1(\Sigma),G)$  the set of all homomorphisms from
the fundamental group $\pi_1(\Sigma)$ of $\Sigma$ to $G.$

Let us consider the orbit space $\mathrm{Hom}(\pi_1(\Sigma),G)/G,$
where the action of $G$ on $\mathrm{Hom}(\pi_1(\Sigma),G)$ by
conjugation i.e. $g\cdot\varrho(\gamma)=g\varrho(\gamma)g^{-1}$, for 
$g\in G,$ $\varrho\in \mathrm{Hom}(\pi_1(\Sigma),G),$ and $\gamma\in
\pi_1(\Sigma).$  It is well known that this is a real analytic
variety. Moreover, for algebraic $G,$
$\mathrm{Hom}(\pi_1(\Sigma),G)/G$ is also  algebraic. This orbit space is
not necessarily Hausdorff (cf., e.g. \cite{Goldman}) but the space
$\mathrm{Rep}(\pi_1(\Sigma),G)=\mathrm{Hom}^{+}(\pi_1(\Sigma),G)/G$
of all reductive representations of $\pi_1(\Sigma)$ in $G$ is
Hausdorff. A reductive representation is the one that once composed
with adjoint representation of $G$ on its Lie algebra $\mathcal{G}$
is a sum of irreducible representations.

\emph{Teichm\"{u}ller space} $\mathrm{Teich}(\Sigma)$ of $\Sigma$
is the space of isotopy classes of complex structures on
$\Sigma.$ A \emph{complex structure} on $\Sigma$ is a homotopy
equivalence of a homeomorphism $f:\Sigma\to S.$ Here, $S$ is a
Riemann surface, and two such homeomorphisms $f:\Sigma\to S,$
$f':\Sigma\to S'$ are said to be equivalent, if there exists a
conformal diffeomorphism $g:S\to S' $ so that $(f')^{-1}\circ
g\circ f$ is isotopic to the identity map on $\Sigma.$

One can lift a complex structure on $\Sigma$ to a complex structure
on the universal covering $\widetilde{\Sigma}$ of $\Sigma.$ By the
Uniformization Theorem, $\widetilde{\Sigma}$ is biholomorphic to the
upper half-plane $\mathbb{H}^2\subset \mathbb{C}.$ It is well known
that each biholomorphic homeomorphism of $\mathbb{H}^2$ is of the
form $f(z)=(az+b)/(cz+d)$ with $a,b,c,d\in \mathbb{R},$ $ad-bc=1.$
This yields a discrete, faithful homomorphism from $\pi_1(\Sigma)$
to $\mathrm{PSL}(2,\mathbb{R}).$ This homomorphism is also well
defined up to conjugation by the orientation preserving isometries
of $\mathbb{H}^2.$ Thus, one can identify $\mathrm{Teich}(\Sigma)$
with the \emph{Fricke space}, i.e. the set
 $\mathrm{Rep}_{\mathrm{df}}(\pi_1(\Sigma),\mathrm{PSL}(2,\mathbb{R}))$ of
discrete faithful representations from $\pi_1(\Sigma)$ to
$\mathrm{PSL}(2,\mathbb{R}).$

Fricke space is a connected component of
$\mathrm{Rep}(\pi_1(\Sigma),\mathrm{PSL}(2,\mathbb{R})).$ Openness
follows from \cite{Weil1}, closedness from
\cite{Chuckrow,Ragunathan}, and connectedness from the
Uniformization Theorem together with the identification of
$\mathrm{Teich}(\Sigma)$ as a cell.

For a finite cover $G$ of $\mathrm{PSL}(2,\mathbb{R}),$ W. Goldman
investigated the connected components of the representation space
$\mathrm{Hom}(\pi_1(\Sigma),G)/G$ \cite{GoldmanTopComp}. He proved
that there exist $4g-3$ connected components of
$\mathrm{Hom}(\pi_1(\Sigma),\mathrm{PSL}(2,\mathbb{R}))/\mathrm{PSL}(2,\mathbb{R}).$
There exist two homeomorphic components, called Teichm\"{u}ller
spaces, which are homeomorphic to
$\mathbb{R}^{|\chi(\Sigma)|\dim\mathrm{PSL}(2,\mathbb{R})}.$

For a split real form $G$ of a semi-simple Lie group, N. Hitchin
investigated the connected components of
$\mathrm{Rep}(\pi_1(\Sigma),G)$ in \cite{Hitchin} by using
techniques of Higgs bundle.  He proved that there exists an
interesting connected component not detected by characteristic
classes. He called it as \emph{Teichm\"{u}ller component} but it is
called now \emph{Hitchin component}.

A \emph{Hitchin component}
$\mathrm{Rep}_{\mathrm{Hitchin}}(\pi_1(\Sigma),G)$ of
$\mathrm{Rep}(\pi_1(\Sigma),G)$ is the connected component
containing Fuchsian representations, i.e. representations of the
form $\varrho\circ \imath,$ where $\varrho:\pi_1(\Sigma)\to
\mathrm{PSL}(2,\mathbb{R})$ is Fuchsian,
$\imath:\mathrm{PSL}(2,\mathbb{R})\to G$ is the representation
corresponding to the $3-$dimensional principal subgroup of B.
Kostant \cite{Kostant}. For $G=\mathrm{PSp}(2n,\mathbb{R}),$
$\imath$ denotes the $2n-$dimensional irreducible representation
corresponding to symmetric power
$\mathrm{Sym}^{2n-1}(\mathbb{R}^2).$

This enables one to identify the Fricke space and thus
$\mathrm{Teich}(\Sigma)$ by a subset of
$\mathrm{Rep}(\pi_1(\Sigma),G).$ N. Hitchin proved in \cite{Hitchin} that each Hitchin component is homoeomorphic to a ball of dimension
$(6g-6)\dim G.$ Recall  that it was proved by F. Labourie in \cite{Labourie2006} that the set $\text{Rep}_{\text{Hitchin}}(\pi_1(\Sigma),G)$  of Hitchin representations is a subset of $\text{Rep}_{\text{Anosov}}(\pi_1(\Sigma),G).$ \\

Applying Theorem \ref{MainThm} and Corollary \ref{cor}, we have the following result.
\begin{cor}\label{MainCorollary}
Let $\Sigma$ be a closed orientable surface of genus $g\geq
2$  and  $\varrho$
be in $\mathrm{Rep}_{\mathrm{Hitchin}}(\pi_1(\Sigma),G).$ Let  $K$
be a cell-decomposition of $\Sigma,$  $\mathbf{c}_p$ be the
geometric bases of
$C_p(K;\mathcal{G}_{\mathrm{Ad}\circ \varrho}),$
$p=0,1,2,$ and $\mathbf{h}_1$ is a basis for
$H_1(\Sigma;\mathcal{G}_{\mathrm{Ad}\circ
\varrho}).$ Then, we have
$$
\mathbb{T}(C_{\ast}(K;\mathcal{G}_{\mathrm{Ad}\circ
\varrho}),\{\mathbf{c}_p\}_{p=0}^2,\{0,\mathbf{h}_1,0\})=
\sqrt{\det\Omega_{\omega_B}}.$$ Moreover, it is  
a volume element 
on  the Hitchin component
$\mathrm{Rep}_{\mathrm{Hitchin}}(\pi_1(\Sigma),G).$
\noindent Here, $G$ is one of $\{\text{PSp}(2n,\mathbb{R})(n\geq 2), \text{PSO}(n,n+1) n\geq 2),\text{PSO}(n,n+1)(n\geq 3\}$ and $\mathcal{G}$ is the corresponding Lie algebra with the non-degenerate Killing form $B$ and  $\omega_B :
H^1(\Sigma;\mathcal{G}_{\mathrm{Ad}\circ
\varrho})\times
H^1(\Sigma;\mathcal{G}_{\mathrm{Ad}\circ
\varrho})\stackrel{\smile_B}{\longrightarrow}
H^2(\Sigma;\mathbb{R})\stackrel{\int_{\Sigma}}{\longrightarrow}\mathbb{R}
$ is the Atiyah-Bott-Goldman symplectic form for $G.$
\end{cor}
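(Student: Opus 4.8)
Part one of the corollary is immediate: Labourie's theorem \cite{Labourie2006} gives $\mathrm{Rep}_{\mathrm{Hitchin}}(\pi_1(\Sigma),G)\subset\mathrm{Rep}_{\mathrm{Anosov}}(\pi_1(\Sigma),G)$, and every Anosov representation is irreducible and purely loxodromic (Corollary~\ref{cor}). Hence each $\varrho\in\mathrm{Rep}_{\mathrm{Hitchin}}(\pi_1(\Sigma),G)$ satisfies the hypotheses of Theorem~\ref{MainThm}, so the identity $\mathbb{T}(\cdots)=\sqrt{\det\Omega_{\omega_B}}$ holds verbatim at every point of the component. The substance of the corollary is therefore the assertion that this pointwise quantity assembles into a volume element, and that is what I would spend the argument on.

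My plan is to exhibit $\sqrt{\det\Omega_{\omega_B}}$ as the Liouville density of the Atiyah--Bott--Goldman symplectic form. First I would recall that $\mathrm{Rep}_{\mathrm{Hitchin}}(\pi_1(\Sigma),G)$ is a smooth manifold \cite{Hitchin} whose tangent space at a class $[\varrho]$ is canonically $H^1(\Sigma;\mathcal{G}_{\mathrm{Ad}\circ\varrho})$; by the vanishing of $H^0$ and $H^2$ already used in Theorem~\ref{MainThm}, this tangent space has even dimension $2N=\lvert\chi(\Sigma)\rvert\dim G$. On it $\omega_B$ is, by Goldman's construction \cite{Goldman}, a closed non-degenerate $2$-form, i.e. a symplectic form. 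The Liouville volume $\omega_B^{\wedge N}/N!$ has, in any basis $\mathbf{h}^1$ of the tangent space, density equal to the Pfaffian of the Gram matrix $\Omega_{\omega_B}$; since $\mathrm{Pf}(\Omega_{\omega_B})^2=\det\Omega_{\omega_B}$ and $\omega_B$ is non-degenerate, one has $\det\Omega_{\omega_B}>0$ and $\sqrt{\det\Omega_{\omega_B}}=\lvert\mathrm{Pf}(\Omega_{\omega_B})\rvert$ is a nowhere-vanishing top form. By the first part this density is exactly the Reidemeister torsion, so the torsion is the symplectic volume element.

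The remaining and most delicate point is to promote this pointwise equality into a genuine volume element over the entire Hitchin component rather than a number attached to each representation. The independence of the torsion from the cell-decomposition $K$, the lifts $\widetilde{e}^p_j$, the $B$-orthonormal basis $\mathcal{A}$, and the conjugacy class of $\varrho$ is exactly Proposition~\ref{Well-definitenessOfRiedemeisterTorsionOfRepresentation}; and since Theorem~\ref{MainThm} gives the identity $\mathbb{T}=\sqrt{\det\Omega_{\omega_B}}$ for \emph{every} basis $\mathbf{h}_1$ with $\mathbf{h}^1$ its Poincar\'{e} dual, the two sides transform identically under a change of $\mathbf{h}_1$ and no inconsistency arises from that choice. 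What must still be checked carefully is that $\omega_B$ is smooth and everywhere non-degenerate as $[\varrho]$ varies over the component, that is, that the Goldman symplectic structure \cite{Goldman} is a genuine symplectic form on all of $\mathrm{Rep}_{\mathrm{Hitchin}}(\pi_1(\Sigma),G)$, so that $\det\Omega_{\omega_B}>0$ throughout and the density never vanishes. I expect this verification, the smoothness and global non-degeneracy of $\omega_B$, to be where the essential work lies; granting it, the Liouville form is a bona fide volume form whose density is the Reidemeister torsion, which is the assertion.
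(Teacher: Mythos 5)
Your proposal is correct, and its first half coincides with the paper's argument: Hitchin representations lie in the Anosov locus by Labourie's theorem, hence are irreducible and purely loxodromic, so Theorem~\ref{MainThm} applies verbatim at every point of the component. For the ``volume element'' half you take a genuinely different route. The paper argues via the determinant-line formalism: since $H_0=H_2=0$, the torsion is a canonical element of $\bigl(\det H_1(\Sigma;\mathcal{G}_{\mathrm{Ad}\circ\varrho})\bigr)^{\pm 1}$, and the identification of $H_1$ (resp.\ $H^1$) with the cotangent (resp.\ tangent) space of $\mathrm{Rep}_{\mathrm{Hitchin}}(\pi_1(\Sigma),G)$ at $[\varrho]$ then exhibits this element directly as a volume element, with no symplectic geometry invoked at that stage. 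You instead read the formula itself as saying the torsion is the Liouville density $\lvert\mathrm{Pf}(\Omega_{\omega_B})\rvert$ of the Atiyah--Bott--Goldman form; this buys the extra information that $\det\Omega_{\omega_B}>0$ (so the square root is real and nowhere vanishing) and identifies the torsion volume with $\omega_B^{\wedge N}/N!$, a sharper and more geometric statement than the paper makes explicit, at the cost of needing properties of $\omega_B$ as $[\varrho]$ varies. One remark on the point you flag as ``where the essential work lies'': pointwise non-degeneracy of $\omega_B$ is not actually open here --- by the commutative diagram (\ref{diagram}), $\omega_B$ is dual to the twisted intersection pairing $(\cdot,\cdot)_{1,1}$, whose non-degeneracy is Poincar\'{e} duality and is already used in the proof of Theorem~\ref{MainThm}; smooth variation over the component is Goldman's theorem \cite{Goldman}, and closedness of $\omega_B$ is not needed for a volume element at all. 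So your deferred verification reduces to standard citable facts and does not constitute a gap.
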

\begin{proof}
From the fact that $\varrho$ belongs to
 $\mathrm{Rep}_{\mathrm{Hitchin}}(\pi_1(\Sigma),G)$ it follows that it is dicrete, faithfull, irreducible, and purely loxodromic (cf.
\cite{BurgerIozziWienhard,V.FockA.Goncharov,Labourie2006}). The irreducibility yields that
$H^0(\Sigma,\mathcal{G}_{\mathrm{Ad}\circ
\varrho})$ and $H^2(\Sigma,\mathcal{G}_{\mathrm{Ad}\circ
\varrho})$ are both  zero.   Hence, by Theorem \ref{MainThm}, we obtain
$$\mathbb{T}(C_{\ast}(K;\mathcal{G}_{\mathrm{Ad}\circ
\varrho}),\{\mathbf{c}_p\}_{p=0}^2,\{0,\mathbf{h}_1,0\})=
\sqrt{\det\Omega_{\omega_B}}.$$
It is well known that
$H^1(\Sigma,\mathcal{G}_{\mathrm{Ad}\circ
\varrho}),$
$H_1(\Sigma,\mathcal{G}_{\mathrm{Ad}\circ
\varrho})$ can be identified respectively with the tangent space
$T_{\varrho }
\mathrm{Rep}_{\mathrm{Hitchin}}(\pi_1(\Sigma),G),$
cotangent space  $T^{\ast}_{\varrho }
\mathrm{Rep}_{\mathrm{Hitchin}}(\pi_1(\Sigma),G)$
of
$\mathrm{Rep}_{\mathrm{Hitchin}}(\pi_1(\Sigma),G)$
(cf., e.g. \cite{Goldman}). Recall also that Reidemeister torsion
$\mathbb{T}(A_{\ast})$ of a general chain complex $A_{\ast}$ of length $n$
belongs to $\otimes_{p=0}^n
(\det(H_p(A_{\ast})))^{(-1)^{p+1}}$ (\cite{SozOJM,Witten}). Here,
$\det(H_p(A_{\ast}))$ is the top exterior power
$\bigwedge^{\dim_{\mathbb{R}} H_p(A_{\ast})
}H_p(A_{\ast})$   of $H_p(A_{\ast})$
and $\det(H_p(A_{\ast}))^{-1}$ is the dual of
$\det(H_p(A_{\ast})).$  Thus, we get a volume element on  the Hitchin component
$\mathrm{Rep}_{\mathrm{Hitchin}}(\pi_1(\Sigma),G)$
of $\mathrm{Rep}(\pi_1(\Sigma),G).$
\end{proof}

\noindent Let us note that since $\mathrm{Teich}(\Sigma)\subset
\mathrm{Rep}_{\mathrm{Hitchin}}(\pi_1(\Sigma),G),$  
Corollary \ref{MainCorollary} is also valid for $\mathrm{Teich}(\Sigma)$ representations.

For the isomorphism
$\mathrm{T}_{\varrho}\mathrm{Teich}(\Sigma)\cong
H^1(\Sigma;\mathcal{G}_{\mathrm{Ad}\circ \varrho}),$ in \cite{Goldman}, Goldman proved that
$\omega_{\mathrm{PSL(2,\mathbb{R})}}:H^{1}(\Sigma;\mathcal{G}_{\mathrm{Ad}\circ
\varrho}) \times H^1(\Sigma;\mathcal{G}_{\mathrm{Ad}\circ \varrho}) \to
\mathbb{R}$   and Weil-Petersson $2-$form differ only by a
constant multiple. More precisely,
$$\omega_{\mathrm{WP}}=-8\omega_{\mathrm{PSL(2,\mathbb{R})}}.$$

 Bonahon parametrized the Teichm\"{u}ller space of $\Sigma$ by using a
maximal geodesic lamination $\lambda$ on $\Sigma$  \cite{Bonahon}. Geodesic
laminations are generalizations of deformation classes of simple
closed curves on $\Sigma.$ More precisely, a geodesic lamination
$\lambda$ on the surface $\Sigma$ is by definition a closed subset
of $\Sigma$ which can be decomposed into family of disjoint simple
geodesics, possibly infinite, called its \emph{leaves.} The
geodesic lamination is \emph{maximal} if it is maximal with
respect to inclusion; this is equivalent to the property that the
complement $\Sigma-\lambda$ is union of finitely many triangles
with vertices at infinity.

\begin{figure}[h]
\begin{center}
\includegraphics[scale=0.5]{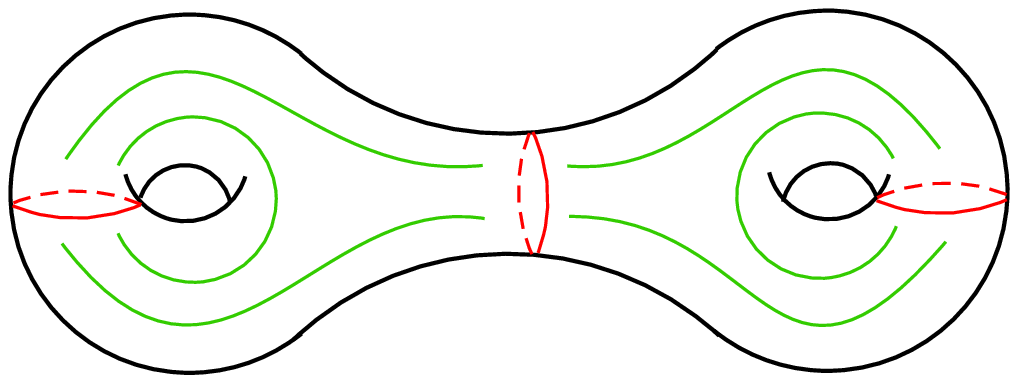}
\end{center}
\end{figure}

 The real-analytical parametrization given by Bonahon identifies
$\mathrm{Teich}(\Sigma)$ to an open convex cone in the vector
space ${\mathcal H}(\lambda,\mathbb{R})$ of all \emph{transverse
cocycles} for $\lambda.$ In particular, at each $\varrho\in
\mathrm{Teich}(\Sigma),$ the tangent space
$\text{T}_{\varrho}\mathrm{Teich}(\Sigma)$ is now identified with
${\mathcal H}(\lambda,\mathbb{R}),$ which is a real vector space
of dimension $3|\chi(\Sigma)|.$

A \emph{transverse cocycle} $\sigma$ for $\lambda$ on $\Sigma$ is
a real-valued function on the set of all arcs $k$ transverse to
(the leaves) of $\lambda$ with the following properties:
\begin{itemize}
\item $\sigma$ is finitely additive, i.e.
$\sigma(k)=\sigma(k_1)+\sigma(k_2),$ whenever the arc $k$
transverse to $\lambda$ is decomposed into two subarcs $k_1,k_2$
with disjoint interiors,
\item $\sigma$ is invariant under the homotopy of arcs transverse to
$\lambda,$ i.e. $\sigma(k)=\sigma(k')$ whenever the transverse arc
$k$ is deformed to arc $k'$ by a family of arcs which are all
transverse to the leaves of  $\lambda.$
\end{itemize}

\begin{figure}[h]
\begin{center}
\includegraphics[scale=0.5]{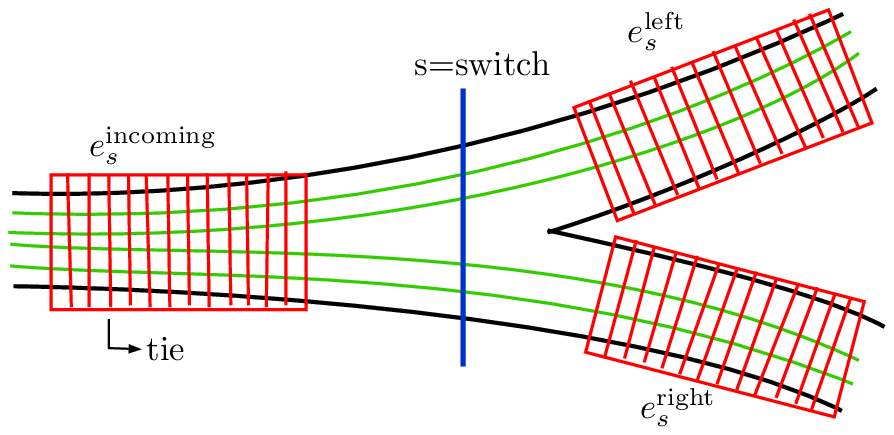}
\end{center}
\end{figure}


The space ${\mathcal H}(\lambda,\mathbb{R})$ has also anti-symmetric
bilinear form, namely the Thurston symplectic form
$\omega_{\mathrm{Thurston}}.$ Let $\lambda$ be a  maximal geodesic lamination on $\Sigma$ and 
$\Phi$ be a fattened train-track carrying the maximal geodesic
lamination.

The Thurston symplectic form is the anti-symmetric bilinear form $\omega_{\mathrm{Thurston}}:\mathcal{H}(\lambda;\mathbb{R})\times
\mathcal{H}(\lambda;\mathbb{R})\to \mathbb{R}$ defined by
$$\omega_{\mathrm{Thurston}}(\sigma_1,\sigma_2)=\frac{1}{2}\sum_{s}
\text{det} \left[\begin{array}{cc}
   \sigma_1(e_s^{\text{left}}) & \sigma_1(e_s^{\text{right}}) \\
   \sigma_2(e_s^{\text{left}}) & \sigma_2(e_s^{\text{right}})
   \end{array}\right],
$$
 where  $\sigma_i(e)\in \mathbb{R}$ is the weight associated to
the edge $e$ by the transverse cocycle $\sigma_i.$ Note that, $\omega_{\mathrm{Thurston}}$ is actually independent of
the train-track $\Phi.$

It is proved in \cite{Soz-Bon} that up to a multiplicative constant,
$\omega_{\mathrm{Thurston}}$ is the same as
$\omega_{\mathrm{PSL(2,\mathbb{R})}}$, and hence is in the same
equivalence class of $\omega_{\mathrm{WP}}.$ More precisely, for
the identification $\text{T}_{\varrho}\mathrm{Teich}(\Sigma)\cong
{\mathcal H}(\lambda;\mathbb{R}),$ the following is valid
$\omega_{\mathrm{PSL(2,\mathbb{R})}}=2\omega_{\mathrm{Thurston}}.$

As a final word on this study, Reidemeister torsion of $\varrho\in
\mathrm{Teich}(\Sigma)$ can be expressed in terms of
$\omega_{\mathrm{Thurston}}.$




\bibliographystyle{aipproc}   

\begin{thebibliography}{9}

\bibitem{Bonahon} F. Bonahon, \emph{Shearing hyperbolic surfaces, bending pleated
surfaces and {T}hurston's symplectic form}, Ann. Fac. Sci.
Toulouse Math. (6) \textbf{5} (1996),  233--297.

\bibitem{BurgerIozziWienhard} M. Burger, A. Iozzi, A. Wienhard,
\emph{Higher Teichm\"{u}ller spaces: From $SL(2,\mathbb{R})$ to other
Lie groups}, arXiv:1004.2894v4 [math.GT], to appear in the Handbook
of Teichm\"{u}ller Theory vol. IV, 2011.


\bibitem{Chapman1} T.A. Chapman,  \emph{Hilbert cube manifolds and the invariance of
Whitehead torsion}, Bull. Amer. Math. Soc. \textbf{79} (1973),
52--56.




\bibitem{Chapman2} \bysame, \emph{Topological invariance of Whitehead
torsion},  Amer. J. Math. \textbf{96} (1974), 488--497.

\bibitem{ChoiGoldman} S. Choi,  W.M. Goldman,  \emph{Convex real projective structures on
closed surfaces are closed}, Proc. Amer. Math. Soc. \textbf{118 (2)}
(1993), 657--661.

\bibitem{Chuckrow} V. Chuckrow,  \emph{On Schottky groups with applications to Kleinian groups}, Ann. Math. \textbf{88 (2)} (1968), 47--61.


\bibitem{deRham} G. de Rham,  \emph{Reidemeister's torsion invariant and rotation of
$S^n$}, 1964 Differential Analysis, Mumbay Colloq. (Mumbay, 1964),
Oxford Univ. Press, London, 1964, 27--36.


\bibitem{V.FockA.Goncharov} V. Fock, A. Goncharov,  \emph{Moduli spaces of local
systems and higher Teichm\"{u}ller theory}, Publ. Math. Inst. Hautes
\'{E}tudes Sci. \textbf{103} (2006), 1--211.



\bibitem{Franz} W. Franz,   \emph{\"{U}ber die torsion einer \"{u}berdeckung}, J.
Reine Angew. Math. \textbf{173} (1935), 245--254.




\bibitem{Goldman} W.M. Goldman,   \emph{The symplectic nature of fundamental groups of
surfaces}, Adv. in Math. \textbf{54} (1984), 200--225.

\bibitem{GoldmanTopComp} \bysame, \emph{Topolgical components of spaces of
representations}, Invent. Math. \textbf{93 (3)} (1988), 557--607.






\bibitem{GuichardWienhard} O. Guichard, A. Wienhard,  \emph{Convex foliated projective
structures and the Hitchin component for}
$\mathrm{PSL}_4(\mathbb{R})$, Duke Math. J. \textbf{144 (3)} (2008),
381-445.

\bibitem{GuichardWienhard1}\bysame, \emph{Topological invariants of Anosov representations}, J. Topol.
\textbf{3 (3)} (2010),  578-642.


\bibitem{GuichardWienhard2}\bysame, \emph{Anosov representations: domain of discontinuity and
applications}, Invent. math. \textbf{190} (2012), 357--438.



\bibitem{Hitchin} N. Hitchin,  \emph{Lie groups and Teichm\"{u}ller spaces}, Topology \textbf{31 (3)} (1992), 449--473.


\bibitem{Katok} A. Katok, B. Hasselblatt, \emph{Introduction to the Modern Theory of Dynamical Systems, Encyclopedia of Mathematics and Its Applications}, Vol. 54,  Cambridge University Press, Cambridge, 1995







\bibitem{Kostant} B. Kostant,   \emph{The principal three-dimensional subgroup and the Betti
numbers of a complex simple Lie group}, Amer. J. Math. \textbf{81}
(1959), 973--1032.

\bibitem{KirbySiebenmann} R.C. Kirby,  L.C. Siebenmann,  \emph{On triangulation of manifolds
and Haupvermutung}. Bull. Amer. Math. Soc. \textbf{75} (1969),
742--749.


\bibitem{Labourie2006} F. Labourie,   \emph{Anosov flows, surface groups and curves in
projective space}, Invent. Math. \textbf{165 (1)} (2006), 51--114.


\bibitem{Milnor2} J. Milnor,   \emph{A duality theorem for Reidemeister torsion}, Ann.
of Math. \textbf{76 (1)} (1962), 137--147.



\bibitem{Milnor} \bysame, \emph{Whitehead torsion}, Bull. Amer. Soc. \textbf{72} (1966),
358--426.


\bibitem{Milnor3} \bysame, \emph{Infinite cyclic covers}, In: Conference on the
Topology of Manifolds (Michigan State Univ., East Lansing, Mich.,
1967), Prindle, Webber \& Schimidt, Boston, Massachusetts, 1968,
115--133.



\bibitem{Porti} J. Porti,   \emph{Torsion de Reidemeister Pour les Varieties
Hyperboliques}, Memoirs of the Amer. Math. Soc. \textbf{128} (612),
Amer. Math. Soc., Providence, RI, 1997.

\bibitem{Ragunathan} M.S. Ragunathan,   \emph{Discrete subgroups of Lie
groups}. Springer-Verlag, Berlin, New York, 1972.


\bibitem{Reidemeister} K. Reidemeister,   \emph{Homotopieringe und
linsenr\"{a}ume}, Hamburger Abhandl \textbf{11} (1935), 102--109.

\bibitem{Soz-Bon} Y. S\"{o}zen, F. Bonahon, \emph{Weil-Petersson and
Thurston symplectic forms}, Duke Math. Jour. Vol 108 No.3 (2001) ,
 581-597.

\bibitem{SozOJM} Y. S\"{o}zen,  \emph{On Reidemeister torsion of a symplectic
complex}, Osaka J. Math. \textbf{45} (2008), 1--39.


\bibitem{SozTopAndApp} \bysame,
\emph{On Fubini-Study form and Reidemeister torsion}, Topology Appl.
\textbf{156} (2009),  951--955.



\bibitem{SozMS} \bysame, \emph{A note on Reidemeister torsion and period matrix of
Riemann surfaces}, Math. Slovaca \textbf{61 (1)} (2011), 29-38.

\bibitem{SozMathScandinavia} \bysame, \emph{Symplectic chain complex and Reidemeister torsion of compact manifolds}, Math. Scand. 111 (1) (2012), 65--91. 


\bibitem{SozFundMathematicae} \bysame, \emph{On a volume element of Hitchin component},  Fundam. Math. \textbf{217}
(2012), 249--264

\bibitem{Soz-Ozel} Y. S\"{o}zen, C. \"{O}zel, \emph{Reidemeister torsion of
product manifolds and quantum entanglement of pure states with
Schmidt rank},  Balkan Journal of Geometry and Its Applications,
Vol.17, No.2, (2012), 66-76.


\bibitem{Tur2001} V. Turaev,   \emph{Introduction to Combinatorial Torsions}, Lectures in
Mathematics ETH Zurich, Birkh\"{a}user,  Verlag, 2001.

\bibitem{Tur2002} \bysame, \emph{Torsions of 3-Dimensional Manifolds}, volume 208 of
Progress in Mathematics, Birkh\"{a}user  Verlag, Basel, 2002.


\bibitem{Weil1} A. Weil,  \emph{Discrete subgroups of Lie
groups}, Ann. of Math. \textbf{72} (1960), 369--384.


\bibitem{Witten} E. Witten,   \emph{On quantum gauge theories in two
dimension}, Comm. Math. Phys. \textbf{141} (1991), 153--209.







\end{thebibliography}



\end{document}